\theoremstyle{plain}
\newtheorem{theorem}{Theorem}[section]
\newtheorem{proposition}[theorem]{Proposition}
\newtheorem{lemma}[theorem]{Lemma}
\newtheorem{corollary}[theorem]{Corollary}
\newtheorem{remark}[theorem]{Remark}
\newtheorem{definition}[theorem]{Definition}
\newtheorem{main theorem}[theorem]{Main Theorem}
\newtheorem{question}[theorem]{Question}
\newtheorem{convention}[theorem]{Convention}
\newcommand{\interior}{\operatorname{int}}
\newcommand{\ZZ}{\mathbb{Z}}
\newcommand{\QQ}{\mathbb{Q}}
\newcommand{\RR}{\mathbb{R}}
\newcommand{\HH}{\mathbb{H}}
\newcommand{\QQQ}{\hat{\mathbb{Q}}}
\newcommand{\RRR}{\hat{\mathbb{R}}}
\newcommand{\PConway}{\mbox{\boldmath$S$}}
\newcommand{\rtangle}[1]{(B^3,t({#1}))}
\newcommand{\DD}{\mathcal{D}}
\newcommand{\RGPC}[2]{\Gamma({#1};{#2})}
\newcommand{\RGP}[1]{\Gamma_{#1}}
\newcommand{\Hecke}{\mbox{$G$}}
\newcommand{\orbs}{\mbox{\boldmath$S$}}
\newcommand{\svert}{\,|\,}
\newcommand{\llangle}{\langle\langle}
\newcommand{\rrangle}{\rangle\rangle}
\newcommand{\lp}{(\hskip -0.07cm (}
\newcommand{\rp}{)\hskip -0.07cm )}
\begin{document}

\title{Epimorphisms from 2-bridge link groups onto Heckoid groups (II)}

\author{Donghi Lee}
\address{Department of Mathematics\\
Pusan National University \\
San-30 Jangjeon-Dong, Geumjung-Gu, Pusan, 609-735, Republic of Korea}
\email{donghi@pusan.ac.kr}

\author{Makoto Sakuma}
\address{Department of Mathematics\\
Graduate School of Science\\
Hiroshima University\\
Higashi-Hiroshima, 739-8526, Japan}
\email{sakuma@math.sci.hiroshima-u.ac.jp}

\subjclass[2010]{Primary 20F06, 57M25 \\
\indent {The first author was supported by the Research Fund Program
of Research Institute for Basic Sciences, Pusan National University,
Korea, 2009, Project No.\,RIBS-PNU-2009-103.
The second author was supported by JSPS Grants-in-Aid 22340013.}}

\begin{abstract}
In Part I of this series of papers, we made
Riley's definition of Heckoid groups for $2$-bridge links explicit,
and gave a systematic construction
of epimorphisms from $2$-bridge link groups onto
Heckoid groups, generalizing Riley's construction.
In this paper, we give a complete characterization of
upper-meridian-pair-preserving epimorphisms
from $2$-bridge link groups onto even Heckoid groups,
by proving that they are exactly the epimorphisms
obtained by the systematic construction.
\end{abstract}
\maketitle

\begin{center}
{\it In honour of J. Hyam Rubinstein
and his contribution to mathematics}
\end{center}

\section{Introduction}

Let $K(r)$ be the $2$-bridge link of slope $r \in \QQ$
and let $n$ be an integer or a half-integer greater than $1$.
In \cite{lee_sakuma_6}, following Riley's work ~\cite{Riley2},
we introduced the {\it Heckoid group $\Hecke(r;n)$ of index $n$ for $K(r)$}
as the orbifold fundamental group
of the {\it Heckoid orbifold $\orbs(r;n)$ of index $n$ for $K(r)$}.
According to whether $n$ is an integer or a non-integral half-integer,
the Heckoid group $\Hecke(r;n)$ and the Heckoid orbifold $\orbs(r;n)$
are said to be {\it even} or {\it odd}.
The even Heckoid orbifold $\orbs(r;n)$
is the $3$-orbifold such that
\begin{enumerate}[\rm (i)]
\item
the underlying space $|\orbs(r;n)|$ is the exterior,
$E(K(r))=S^3-\interior N(K(r))$, of $K(r)$, and
\item
the singular set is the lower tunnel of $K(r)$,
where the index of the singularity is $n$.
\end{enumerate}
For a description of odd Heckoid orbifolds,
see \cite[Proposition ~5.3]{lee_sakuma_6}.

In \cite[Theorem ~2.3]{lee_sakuma_6},
we gave a systematic construction
of upper-meridian-pair-preserving epimorphisms
from $2$-bridge link groups onto
Heckoid groups,
generalizing Riley's construction in \cite{Riley2}.

The main purpose of this paper is to describe all
upper-meridian-pair-preserving epimorphisms
from $2$-bridge link groups onto {\it even} Heckoid groups
(Theorem ~\ref{thm:epimorophism2}).
The theorem says that all such epimorphisms are contained in
those constructed in \cite[Theorem ~2.3]{lee_sakuma_6}.
To prove this result, we determine
those essential simple loops on a $2$-bridge sphere
in an even Heckoid orbifold $\orbs(r;n)$
which are null-homotopic in $\orbs(r;n)$ (Theorem ~\ref{thm:null-homotopy}).
These results form an analogy of \cite[Main Theorem ~2.4]{lee_sakuma},
which describes all upper-meridian-pair-preserving epimorphisms
between $2$-bridge link groups,
and that of \cite[Main Theorem ~2.3]{lee_sakuma},
which gives a complete characterization of
those essential simple loops on a $2$-bridge sphere
in a $2$-bridge link complement
which are null-homotopic in the link complement.
As in \cite{lee_sakuma}, the key tool is small cancellation theory,
applied to two-generator and one-relator presentations
of even Heckoid groups.

This paper is organized as follows.
In Section ~\ref{statements},
we describe the main results.
In Section ~\ref{group_presentation}, we introduce
a two-generator and one-relator presentation of an even Heckoid group, and
review basic facts concerning its single relator established in \cite{lee_sakuma}.
In Section ~\ref{sec:small_cancellation_theory},
we apply small cancellation theory to the two-generator and one-relator presentations
of even Heckoid groups.
In Section ~\ref{sec:proof_of_the_theorem},
we prove Theorem ~\ref{thm:null-homotopy}.

\section{Main results}
\label{statements}

We quickly recall notation and basic facts introduced in \cite{lee_sakuma_6}.
The {\it Conway sphere} $\PConway$ is the 4-times punctured sphere
which is obtained as the quotient of $\RR^2-\ZZ^2$
by the group generated by the $\pi$-rotations around
the points in $\ZZ^2$.
For each $s \in \QQQ:=\QQ\cup\{\infty\}$,
let $\alpha_s$ be the simple loop in $\PConway$
obtained as the projection of a line in $\RR^2-\ZZ^2$
of slope $s$.
We call $s$ the {\it slope} of the simple loop $\alpha_s$.

For each $r\in \QQQ$,
the {\it $2$-bridge link $K(r)$ of slope $r$}
is the sum of the rational tangle
$\rtangle{\infty}$ of slope $\infty$ and
the rational tangle $\rtangle{r}$ of slope $r$.
Recall that $\partial(B^3-t(\infty))$ and $\partial(B^3-t(r))$
are identified with $\PConway$
so that $\alpha_{\infty}$ and $\alpha_r$
bound disks in $B^3-t(\infty)$ and $B^3-t(r)$, respectively.
By van-Kampen's theorem, the link group $G(K(r))=\pi_1(S^3-K(r))$ is obtained as follows:
\[
G(K(r))=\pi_1(S^3-K(r))
\cong \pi_1(\PConway)/ \llangle\alpha_{\infty},\alpha_r\rrangle
\cong \pi_1(B^3-t(\infty))/\llangle\alpha_r\rrangle.
\]
We call the image in the link group
of the ``meridian pair'' of $\pi_1(B^3-t(\infty))$
the {\it upper meridian pair}.

If $r$ is a rational number and $n\ge 2$ is an integer,
then by the description of the even Heckoid orbifold $\orbs(r;n)$ in the introduction,
the even Hekoid group $\Hecke(r;n)=\pi_1(\orbs(r;n))$ is identified with
\[
\Hecke(r;n)
\cong\pi_1(\PConway)/ \llangle\alpha_{\infty},\alpha_r^n\rrangle
\cong \pi_1(B^3-t(\infty))/\llangle\alpha_r^n\rrangle.
\]
In particular,
the even Heckoid group $\Hecke(r;n)$ is a two-generator and one-relator group.
We call the image in $\Hecke(r;n)$
of the meridian pair of $\pi_1(B^3-t(\infty))$
the {\it upper meridian pair}.

This paper and its sequel ~\cite{lee_sakuma_8}
are concerned with the following natural
question, which is an analogy of \cite[Question ~1.1]{lee_sakuma_0}
that is completely solved in the series of papers
\cite{lee_sakuma, lee_sakuma_2, lee_sakuma_3, lee_sakuma_4}
and applied in \cite{lee_sakuma_5}.

\begin{question}
\label{question1}
{\rm
For $r$ a rational number and $n$ an integer or a half-integer
greater than $1$,
consider the Heckoid group $\Hecke(r;n)$ of index $n$ for the $2$-bridge link $K(r)$.
\begin{enumerate}[\indent \rm (1)]
\item
Which essential simple loop $\alpha_s$ on $\PConway$
determines the trivial element of $\Hecke(r;n)$?
\item
For two distinct essential simple loops $\alpha_s$ and $\alpha_{s'}$ on $\PConway$,
when do they determine the same conjugacy class in $\Hecke(r;n)$?
\end{enumerate}
}
\end{question}

In \cite[Theorem ~2.4]{lee_sakuma_6},
we gave a certain sufficient condition for each of the questions.
In this paper, we prove that, for even Heckoid groups,
the sufficient condition for (1) is actually a necessary and sufficient condition.
This enables us to describe all upper-meridian-pair-preserving
epimorphisms from $2$-bridge link groups onto even Heckoid groups.

Let $\DD$ be the {\it Farey tessellation}
of the upper half plane $\HH^2$.
Then $\QQQ$ is identified with the set of the ideal vertices of $\DD$.
Let $\RGP{\infty}$ be the group of automorphisms of
$\DD$ generated by reflections in the edges of $\DD$ with an endpoint $\infty$.
For $r$ a rational number
and $n$ an integer or a half-integer greater than $1$,
let $C_r(2n)$ be the group of automorphisms of $\DD$ generated
by the parabolic transformation, centered on the vertex $r$,
by $2n$ units in the clockwise direction,
and let $\RGPC{r}{n}$ be the group generated by $\RGP{\infty}$ and $C_r(2n)$.
Suppose that $r$ is not an integer, i.e., $K(r)$ is not a trivial knot.
Then $\RGPC{r}{n}$ is the free product $\RGP{\infty}*C_r(2n)$
having a fundamental domain, $R$, shown in Figure ~\ref{fig.fd_orbifold}.
Here, $R$ is obtained as the intersection of fundamental domains
for $\RGP{\infty}$ and $C_r(2n)$, and so
$R$ is bounded by the following two pairs of Farey edges:
\begin{enumerate}[\indent \rm (1)]
\item
the pair of adjacent Farey edges with an endpoint $\infty$
which cuts off a region in $\bar\HH^2$ containing $r$, and
\item
a pair of Farey edges with an endpoint $r$
which cuts off a region in $\bar\HH^2$ containing $\infty$ such that
one edge is the image of the other by a generator of $C_r(2n)$.
\end{enumerate}

Let $\bar{I}(r;n)$ be the union of
two closed intervals
in $\partial \HH^2=\RRR$
obtained as the intersection of the closure of $R$ and $\partial \HH^2$.
(In the special case when $r\equiv \pm1/p \pmod{\ZZ}$
for some integer $p>1$,
one of the intervals may be degenerated to a single point.)
Note that there is a pair $\{r_1, r_2\}$
of boundary points of $\bar{I}(r;n)$
such that $r_2$ is the image of $r_1$ by a generator of $C_r(2n)$.
Set $I(r;n):=\bar{I}(r;n) -\{r_i\}$ with $i=1$ or $2$.
Note that
$I(r;n)$ is the disjoint union of a closed interval and a half-open interval,
except for the special case when $r\equiv \pm 1/p \pmod{\ZZ}$.

\begin{figure}[htbp]
\begin{center}
\includegraphics{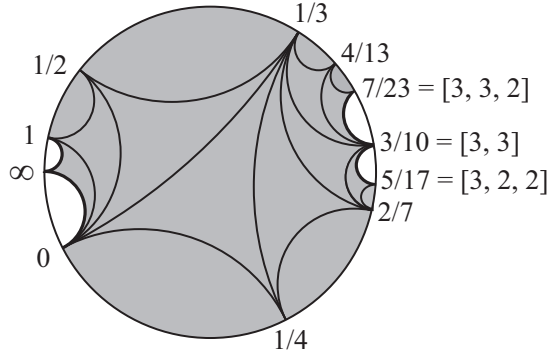}
\end{center}
\caption{\label{fig.fd_orbifold}
A fundamental domain of $\RGPC{r}{n}$ in the
Farey tessellation (the shaded domain) for $r=3/10=${$\cfrac{1}{3+\cfrac{1}{3}}$}\,$=:[3,3]$ and $n=2$.
In this case, $\bar{I}(r;n)=[0,5/17] \cup [7/23,1]$.}
\end{figure}

Then we obtain the following refinement of
\cite[Theorem ~2.4]{lee_sakuma_6}.

\begin{theorem}
\label{thm:fundametal_domain}
Suppose that $r$ is a non-integral rational number and
that $n$ is an integer or a half-integer greater than $1$.
Then, for any $s\in\QQQ$, there is a unique rational number
$s_0\in I(r;n) \cup \{\infty, r\}$
such that $s$ is contained in the $\RGPC{r}{n}$-orbit of $s_0$.
Moreover the conjugacy classes
$\alpha_s$ and $\alpha_{s_0}$ in $\Hecke(r;n)$ are equal.
In particular, if $s_0=\infty$, then $\alpha_s$ is the trivial conjugacy class
in $\Hecke(r;n)$.
\end{theorem}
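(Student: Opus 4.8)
The plan is to separate the two assertions of the theorem: the combinatorial statement about orbits (existence and uniqueness of $s_0$) carries the genuinely new content, whereas the statement about conjugacy classes in $\Hecke(r;n)$ is essentially inherited from the sufficient condition of \cite[Theorem ~2.4]{lee_sakuma_6}. So I would first establish the orbit statement purely in terms of the action of $\RGPC{r}{n}$ on the Farey tessellation $\DD$, using that $R$ is a fundamental domain for the free product $\RGPC{r}{n}=\RGP{\infty}*C_r(2n)$, and then read off the conclusions about $\alpha_s$.

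For existence, I would restrict the action on $\HH^2$ to the ideal boundary $\partial\HH^2=\RRR$. By definition $\closure{R}\cap\RRR=\bar{I}(r;n)$, and the two remaining ideal vertices of $R$ are the points $\infty$ and $r$ at which the two bounding pairs of Farey edges meet; thus the ideal vertices carried by $\closure{R}$ are exactly those of $\bar{I}(r;n)\cup\{\infty,r\}$. Since every element of $\RGPC{r}{n}$ is an automorphism of $\DD$ and $R$ is bounded by Farey edges, the $\RGPC{r}{n}$-orbit of an ideal vertex $s\in\QQQ$ consists of ideal vertices, one of which lies in $\closure{R}$ and hence in $\bar{I}(r;n)\cup\{\infty,r\}$; this is the desired $s_0$.

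For uniqueness I would analyse the side-pairings of $R$ and the identifications they induce among the ideal vertices of $\closure{R}$. The two bounding edges with endpoint $\infty$ are mirrors of reflections in $\RGP{\infty}$, each fixed pointwise, so they identify no two distinct ideal vertices of $\closure{R}$; in particular the two outer endpoints of $\bar{I}(r;n)$ (the integers $m$ and $m+1$ flanking $r$) remain inequivalent, being in the even and odd orbits of the infinite dihedral group $\RGP{\infty}$. The two bounding edges with endpoint $r$ are interchanged by a generator of $C_r(2n)$, which fixes $r$ and sends $r_1$ to $r_2$. Hence the only identification among the ideal vertices of $\bar{I}(r;n)\cup\{\infty,r\}$ is $r_1\leftrightarrow r_2$, while every interior rational of the two intervals has trivial stabilizer and no identification. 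Deleting one of $r_1,r_2$ to form $I(r;n)$ therefore yields an exact set of representatives, and the fundamental-domain property gives that distinct points of $I(r;n)\cup\{\infty,r\}$ lie in distinct $\RGPC{r}{n}$-orbits. The degenerate case $r\equiv\pm1/p\pmod{\ZZ}$, in which one interval of $\bar{I}(r;n)$ collapses to a point and an endpoint acquires a larger stabilizer, should be checked separately but reduces to the same bookkeeping.

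Finally, once $s$ is known to lie in the orbit of its unique $s_0\in I(r;n)\cup\{\infty,r\}$, the equality of the conjugacy classes $\alpha_s$ and $\alpha_{s_0}$ in $\Hecke(r;n)$ is precisely the sufficient condition of \cite[Theorem ~2.4]{lee_sakuma_6}, applied to the element of $\RGPC{r}{n}$ carrying $s_0$ to $s$; here one uses that the generators of $\RGP{\infty}$ act through symmetries preserving the upper meridian pair and that the generator of $C_r(2n)$ is accommodated by the relation $\alpha_r^n=1$. The special case $s_0=\infty$ then gives that $\alpha_s$ is trivial, since $\alpha_{\infty}$ is a relator in the defining presentation $\Hecke(r;n)\cong\pi_1(\PConway)/\llangle\alpha_{\infty},\alpha_r^n\rrangle$. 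I expect the main obstacle to be the uniqueness step: matching the combinatorial side-pairings of $R$ with the induced identifications on $\QQQ$, confirming that the reflections fixing the $\infty$-edges create no spurious equivalences among the boundary representatives, and carrying out the separate treatment of the degenerate case.
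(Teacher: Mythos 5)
Your proposal matches the paper's own (very brief) proof in substance: the paper likewise derives the first assertion from the fact that $R$ is a fundamental domain for $\RGPC{r}{n}$ acting on $\HH^2$ (arguing as in \cite[Lemma ~7.1]{lee_sakuma}), and obtains the remaining assertions directly from \cite[Theorem ~2.4]{lee_sakuma_6}. Your additional bookkeeping with the side-pairings and the degenerate case is a correct fleshing-out of the same argument, not a different route.
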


In fact, the first assertion is proved as in \cite[Lemma ~7.1]{lee_sakuma}
by using the fact
that $R$ is a fundamental domain for the action of $\RGPC{r}{n}$ on $\HH^2$.
The remaining assertions are nothing other than \cite[Theorem ~2.4]{lee_sakuma_6}.

The following main theorem shows that
the converse to the last statement in Theorem ~\ref{thm:fundametal_domain}
holds for {\it even} Heckoid groups.

\begin{theorem}
\label{thm:null-homotopy}
Suppose that $r$ is a non-integral rational number and
that $n$ is an integer greater than $1$.
Then $\alpha_s$ represents the trivial element of $\Hecke(r;n)$
if and only if $s$ belongs to the $\RGPC{r}{n}$-orbit of $\infty$.
In other words, if $s\in I(r;n) \cup \{r\}$, then
$\alpha_s$ does not represent the trivial element of $\Hecke(r;n)$.
\end{theorem}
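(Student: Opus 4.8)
The plan is to prove the contrapositive form stated in the theorem: if $s\in I(r;n)\cup\{r\}$, then $\alpha_s$ is a nontrivial element of $\Hecke(r;n)$. The ``if'' direction of the theorem is already contained in Theorem~\ref{thm:fundametal_domain}, and that theorem simultaneously reduces the general ``only if'' direction to these representatives: every $s$ is $\RGPC{r}{n}$-equivalent to a unique $s_0\in I(r;n)\cup\{\infty,r\}$ with $\alpha_s=\alpha_{s_0}$ as conjugacy classes, and the case $s_0=\infty$ is precisely the $\RGPC{r}{n}$-orbit of $\infty$. So it suffices to show that $\alpha_{s_0}$ is nontrivial for every $s_0\in I(r;n)\cup\{r\}$.

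First I would pass to the two-generator one-relator presentation $\Hecke(r;n)\cong\langle a,b \mid u_r^{\,n}\rangle$, where $a,b$ generate the upper meridian pair and $u_r$ is the cyclically reduced word representing $\alpha_r$ (this is the presentation to be set up in Section~\ref{group_presentation}, with $u_r$ the relator of the underlying $2$-bridge link group $G(K(r))$). For each slope $s$ I would introduce the cyclic word $u_s$ representing $\alpha_s$, as in \cite{lee_sakuma}, whose syllable structure is read off from the continued-fraction/Farey data of $s$. The boundary case $s_0=r$ can be disposed of first: since $u_r$ is not a proper power, the torsion theory of one-relator groups shows that $\alpha_r=u_r$ has order exactly $n\ge 2$ in $\langle a,b\mid u_r^{\,n}\rangle$, so $u_r\ne 1$; geometrically this is just the statement that $\alpha_r$ is the meridian of the order-$n$ singular axis of $\orbs(r;n)$. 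The substance of the theorem is therefore the case $s_0\in I(r;n)$.

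For $s_0\in I(r;n)$ I would argue by small cancellation theory, following the pattern of \cite{lee_sakuma} but now applied to the powered relator $u_r^{\,n}$. The key preliminary observation, to be established in Section~\ref{sec:small_cancellation_theory}, is that, since $u_r$ is cyclically reduced and primitive, every piece of the symmetrized closure of $\{u_r^{\,n}\}$ has length less than $|u_r|$: two distinct cyclic conjugates of $(u_r^{\,n})^{\pm1}$ agreeing on a prefix of length $\ge|u_r|$ would have to agree on a full period, forcing $u_r$ to have period shorter than $|u_r|$. Hence pieces are bounded by the pieces of $u_r$ itself, while the relator now has length $n|u_r|$; so $u_r^{\,n}$ satisfies a small cancellation condition strong enough to apply a Greendlinger-type lemma, to the effect that a freely reduced word representing $1$ must contain a subword comprising more than half of a cyclic conjugate of $u_r^{\pm n}$, i.e.\ a subword of length exceeding $\tfrac{n}{2}\,|u_r|$ that reads along $u_r^{\,n}$.

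The main obstacle, and the heart of the proof, is then to show that the word $u_{s_0}$ for $s_0\in I(r;n)$ contains no such long subword, so that it cannot represent $1$. This is exactly where the defining property of the fundamental domain $R$ enters: the generator of $C_r(2n)$ is the parabolic that advances $2n$ Farey units about $r$, and the constraint $s_0\in I(r;n)$ is precisely what prevents the pattern $u_r$ from repeating often enough in $u_{s_0}$ to produce a subword matching more than half of $u_r^{\,n}$. Concretely, I would translate $s_0\in I(r;n)$ into a bound on the maximal run of $u_r$-periodic syllables occurring in the cyclic word $u_{s_0}$, using the Farey-geometric description of $I(r;n)$ together with the facts about $u_r$ recalled from \cite{lee_sakuma}, and show that this run is too short to contain half of $u_r^{\,n}$. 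The degenerate case $r\equiv\pm1/p\pmod{\ZZ}$, where one interval of $\bar{I}(r;n)$ collapses to a point, would be handled separately by the same overlap analysis. Carrying out this bookkeeping carefully — matching the combinatorics of $I(r;n)$ against the piece structure of $u_r^{\,n}$ — is the step I expect to be the most delicate.
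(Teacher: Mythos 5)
Your overall architecture matches the paper's: reduce to representatives in $I(r;n)\cup\{r\}$ via Theorem~\ref{thm:fundametal_domain}, dispose of $s_0=r$ by one-relator torsion theory, and attack $s_0\in I(r;n)$ by small cancellation on the relator $u_r^n$ followed by a continued-fraction analysis of which slopes can contain the forced subword. However, the small cancellation step as you describe it has a genuine gap, and the gap propagates into the final combinatorial step. From ``every piece has length less than $|u_r|$'' you get at best a metric condition $C'(1/n)$ (even the sharper true bound, that pieces have length less than $|u_r|/2$, only gives $C'(1/(2n))$), and for $n=2$ or $3$ this is weaker than $C'(1/6)$, so Greendlinger's lemma does not apply. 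The paper instead verifies the \emph{non-metric} conditions $C(4n)$ and $T(4)$ (Proposition~\ref{prop:small_cancellation_condition_heckoid}); the $T(4)$ condition, which your proposal never mentions, comes from the alternating structure of the words and is exactly what rescues small $n$. It is then combined with the $[4,4n]$-map curvature inequality (Proposition~\ref{prop:key}) and the fact that an alternating boundary word forces no degree-$3$ boundary vertices, to produce $4n-2$ consecutive degree-$2$ boundary vertices and hence a subword of $(u_s)$ that is a product of $4n-1$ pieces (Corollary~\ref{cor:key}).

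The second, more decisive problem is that the conclusion you extract from Greendlinger --- a subword exceeding \emph{half} of $u_r^{\pm n}$ --- is too weak to be contradicted by $s\in I(r;n)$, so your final step would be attempting to prove a false statement. The intervals $I_1(r;n)$, $I_2(r;n)$ have endpoints $[m_1,\dots,m_k,2n-2]$ and $[m_1,\dots,m_{k-1},m_k-1,2]$, calibrated precisely to exclude the subsequences $((2n-1)\langle S_1,S_2\rangle)$, resp.\ $((2n-2)\langle m\rangle)$ when $k=1$ --- i.e.\ a fraction about $1-1/(2n)$ of the relator, which is what the product of $4n-1$ pieces delivers via Lemma~\ref{maximal_piece2}(2). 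With only ``more than half'' you would need to exclude roughly $(n\langle S_1,S_2\rangle)$, and this fails: for example, for $r=1/m$ and $n\ge 5$ the slope $s=[m,2n-3]$ lies in $I_1(r;n)$, yet $CS(s)$ contains $((2n-4)\langle m\rangle)$ and hence $u_s$ contains a subword of $u_r^{\pm n}$ of length about $(2n-4)m>mn=\tfrac12|u_r^n|$. So the half-threshold cannot distinguish the slopes in $I(r;n)$ from those outside it; you need the full strength of Corollary~\ref{cor:key} before the induction of Proposition~\ref{prop:connection} can close the argument.
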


Arguing as in \cite[Proof of Theorem ~2.3]{lee_sakuma_6},
we see that
the above theorem implies the following theorem,
which says that the converse to \cite[Theorem ~2.3]{lee_sakuma_6}
holds for even Heckoid groups.

\begin{theorem}
\label{thm:epimorophism2}
Suppose that $r$ is a non-integral rational number and
that $n$ is an integer greater than $1$.
Then there is an upper-meridian-pair-preserving epimorphism
from $G(K(s))$ to $\Hecke(r;n)$
if and only if $s$ or $s+1$ belongs to the
$\RGPC{r}{n}$-orbit of
$\infty$.
\end{theorem}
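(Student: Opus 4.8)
The plan is to deduce Theorem~\ref{thm:epimorophism2} from Theorem~\ref{thm:null-homotopy} by running the argument of \cite[Proof of Theorem~2.3]{lee_sakuma_6} in both directions, so that the sufficient condition established there becomes an equivalence. First I would recall the setup: an upper-meridian-pair-preserving homomorphism $\varphi\colon G(K(s))\to\Hecke(r;n)$ is determined, up to the relevant equivalence, by where it sends the upper meridian pair, and the only constraint is that the image of the defining relator $\alpha_s$ (or its relevant power, read off from the presentation $G(K(s))\cong\pi_1(B^3-t(\infty))/\llangle\alpha_s\rrangle$) must be trivial in $\Hecke(r;n)$. Concretely, the upper meridian pair of $\Hecke(r;n)$ is the image of the meridian pair of $\pi_1(B^3-t(\infty))$, and an epimorphism sending the upper meridian pair of $G(K(s))$ onto that of $\Hecke(r;n)$ exists precisely when the loop $\alpha_s$ on the Conway sphere $\PConway$ becomes null-homotopic in the Heckoid orbifold $\orbs(r;n)$.

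Next I would make the translation between the slope condition on $s$ and the triviality of $\alpha_s$ precise. By Theorem~\ref{thm:null-homotopy}, $\alpha_s$ represents the trivial element of $\Hecke(r;n)$ if and only if $s$ lies in the $\RGPC{r}{n}$-orbit of $\infty$. The appearance of ``$s$ or $s+1$'' in Theorem~\ref{thm:epimorophism2}, as opposed to just ``$s$'' in Theorem~\ref{thm:null-homotopy}, reflects the ambiguity in identifying $K(s)$ with the pair $(B^3-t(\infty))\cup(B^3-t(s))$: the $2$-bridge link $K(s)$ and its meridian data are unchanged under $s\mapsto s+1$ (equivalently, under the relevant element of $\RGP{\infty}$), so an epimorphism may arise from either representative. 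I would therefore check that $\alpha_s$ and $\alpha_{s+1}$ are carried to conjugate (indeed, simultaneously trivial or nontrivial) elements, using the description of $\RGP{\infty}$ as generated by reflections in Farey edges with endpoint $\infty$, and invoke Theorem~\ref{thm:fundametal_domain} to reduce an arbitrary slope to its canonical representative in $I(r;n)\cup\{\infty,r\}$.

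For the forward direction I would argue that if such an epimorphism exists, then $\alpha_s$ (or $\alpha_{s+1}$) must represent the trivial element of $\Hecke(r;n)$, because the relator of the source group maps into the relations of the target; Theorem~\ref{thm:null-homotopy} then forces the corresponding slope into the $\RGPC{r}{n}$-orbit of $\infty$. For the converse, if $s$ or $s+1$ is in the orbit of $\infty$, then the corresponding $\alpha$ is trivial in $\Hecke(r;n)$ by the ``if'' half (the easy, already-known direction from \cite[Theorem~2.4]{lee_sakuma_6}), and so the assignment on meridians extends to a well-defined homomorphism $G(K(s))\to\Hecke(r;n)$; surjectivity follows because the upper meridian pair generates $\Hecke(r;n)$, as it generates $\pi_1(B^3-t(\infty))$, whose quotient $\Hecke(r;n)$ is.

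The main obstacle, and the only genuinely new ingredient, is Theorem~\ref{thm:null-homotopy} itself, which supplies the necessity; once that is granted, the deduction is a matter of bookkeeping. The subtle point within the deduction is the careful handling of the $s\mapsto s+1$ normalization and of the distinction between ``homomorphism'' and ``epimorphism,'' ensuring that the constructed map is genuinely surjective and that the equivalence is stated in terms of honest epimorphisms; this is exactly where I would follow \cite[Proof of Theorem~2.3]{lee_sakuma_6} most closely, since that argument already packages the meridian-generation and well-definedness checks in the form needed here.
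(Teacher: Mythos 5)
Your proposal is correct and follows essentially the same route as the paper, which proves Theorem~\ref{thm:epimorophism2} simply by citing the argument of \cite[Proof of Theorem~2.3]{lee_sakuma_6} and observing that Theorem~\ref{thm:null-homotopy} supplies the previously missing necessity direction. Your fleshed-out account of that deduction --- the translation between triviality of $\alpha_s$ and existence of the epimorphism, the $s\mapsto s+1$ normalization, and the surjectivity via generation by the upper meridian pair --- is exactly the bookkeeping the paper leaves implicit.
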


\begin{remark}
\label{remark:remaining-case}
{\rm
(1) When $r$ is an integer,
the Heckoid group $\Hecke(r;n)\cong \Hecke(0;n)$ is isomorphic to
the subgroup $\langle P, SPS^{-1}\rangle$
of the classical {\it Hecke group}
$\langle P, S\rangle$ introduced in \cite{Hecke}, where
\[
P=\begin{pmatrix}
1 & \quad 2\cos\frac{\pi}{2n}\\
0 & \quad 1
\end{pmatrix},
\quad
S=
\begin{pmatrix}
0 & \quad 1\\
-1 & \quad 0
\end{pmatrix}.
\]
Moreover, the group $\Gamma(0;n)$ is the free product of
three cyclic groups of order $2$ generated by the reflections
in the Farey edges $\langle \infty,0\rangle$ and $\langle \infty,1\rangle$
and the geodesic $\overline{1,1/n}$.
(The last geodesic is a Farey edge if $n$ is an integer,
whereas it bisects a pair of adjacent Farey triangles if $n$ is a non-integral half-integer.)
The region of $\HH^2$ bounded by these three geodesics is a fundamental domain
for the action of $\Gamma(0;n)$ on $\HH^2$.
It is easy to see that Theorem ~\ref{thm:fundametal_domain}
continues to be valid when $r$ is an integer,
provided that we set $I(0;n):=[1/n,n]$.
It is plausible that Theorems ~\ref{thm:null-homotopy} and \ref{thm:epimorophism2}
are also valid even when $r$ is an integer.
However, we cannot directly apply the arguments of this paper, and
this case will be treated elsewhere.

(2) It is natural to expect that
Theorems ~\ref{thm:null-homotopy} and
\ref{thm:epimorophism2} also hold for odd Heckoid groups.
However, we do not know how to treat these groups at this moment,
because they are not one-relator groups by
\cite[Proposition ~6.7]{lee_sakuma_6}.
}
\end{remark}

\section{Presentations of even Heckoid groups and review of basic facts from \cite{lee_sakuma}}
\label{group_presentation}

In the remainder of this paper, we restrict our attention
to the {\it even} Heckoid groups $\Hecke(r;n)$.
Thus $n$ denotes an integer with $n\ge 2$.
In order to describe the two-generator and one-relator presentations of even Heckoid groups
to which we apply small cancellation theory,
recall that
\[
\Hecke(r;n)\cong
\pi_1(\PConway)/ \llangle\alpha_{\infty},\alpha_r^n\rrangle
\cong
\pi_1(B^3-t(\infty))/ \llangle\alpha_r^n\rrangle.
\]
Let $\{a,b\}$ be the standard meridian generator pair of $\pi_1(B^3-t(\infty), x_0)$
as described in \cite[Section ~3]{lee_sakuma}
(see also \cite[Section ~5]{lee_sakuma_0}).
Then $\pi_1(B^3-t(\infty))$ is identified with the free group $F(a,b)$.
For the rational number $r=q/p$, where $p$ and $q$ are relatively prime positive integers,
let $u_r$ be the word in $\{a,b\}$ obtained as follows.
(For a geometric description, see \cite[Section ~5]{lee_sakuma_0}.)
Set $\epsilon_i = (-1)^{\lfloor iq/p \rfloor}$,
where $\lfloor x \rfloor$ is the greatest integer not exceeding $x$.
\begin{enumerate}[\indent \rm (1)]
\item If $p$ is odd, then
\[u_{q/p}=a\hat{u}_{q/p}b^{(-1)^q}\hat{u}_{q/p}^{-1},\]
where
$\hat{u}_{q/p} = b^{\epsilon_1} a^{\epsilon_2} \cdots b^{\epsilon_{p-2}} a^{\epsilon_{p-1}}$.
\item If $p$ is even, then
\[u_{q/p}=a\hat{u}_{q/p}a^{-1}\hat{u}_{q/p}^{-1},\]
where
$\hat{u}_{q/p} = b^{\epsilon_1} a^{\epsilon_2} \cdots a^{\epsilon_{p-2}} b^{\epsilon_{p-1}}$.
\end{enumerate}
Then $u_r\in F(a,b)\cong\pi_1(B^3-t(\infty))$
is represented by the simple loop $\alpha_r$,
and we obtain the following two-generator and one-relator presentation
of the even Heckoid group $\Hecke(r;n)$,
which is used throughout the remainder of this paper:
\[
\Hecke(r;n) \cong\pi_1(B^3-t(\infty))/\llangle \alpha_r^n\rrangle
\cong \langle a, b \svert u_r^n \rangle.
\]

We recall the definition of the sequences $S(r)$ and $T(r)$ and the cyclic sequences
$CS(r)$ and $CT(r)$ of slope $r$ defined in \cite{lee_sakuma},
all of which are read from the word $u_r$ defined above,
and review several important properties of these sequences
from \cite{lee_sakuma} so that we can adopt small cancellation theory
in the succeeding section.
To this end, we fix some definitions and notation.
Let $X$ be a set.
By a {\it word} in $X$, we mean a finite sequence
$x_1^{\epsilon_1}x_2^{\epsilon_2}\cdots x_t^{\epsilon_t}$
where $x_i\in X$ and $\epsilon_i=\pm 1$.
Here we call $x_i^{\epsilon_i}$ the {\it $i$-th letter} of the word.
For two words $u, v$ in $X$, by
$u \equiv v$ we denote the {\it visual equality} of $u$ and
$v$, meaning that if $u=x_1^{\epsilon_1} \cdots x_t^{\epsilon_t}$
and $v=y_1^{\delta_1} \cdots y_m^{\delta_m}$ ($x_i, y_j \in X$; $\epsilon_i, \delta_j=\pm 1$),
then $t=m$ and $x_i=y_i$ and $\epsilon_i=\delta_i$ for each $i=1, \dots, t$.
For example, two words $x_1x_2x_2^{-1}x_3$ and $x_1x_3$ ($x_i \in X$) are {\it not} visually equal,
though $x_1x_2x_2^{-1}x_3$ and $x_1x_3$ are equal as elements of the free group with basis $X$.
The length of a word $v$ is denoted by $|v|$.
A word $v$ in $X$ is said to be {\it reduced} if $v$ does not contain $xx^{-1}$ or $x^{-1}x$ for any $x \in X$.
A word is said to be {\it cyclically reduced}
if all its cyclic permutations are reduced.
A {\it cyclic word} is defined to be the set of all cyclic permutations of a
cyclically reduced word. By $(v)$ we denote the cyclic word associated with a
cyclically reduced word $v$.
Also by $(u) \equiv (v)$ we mean the {\it visual equality} of two cyclic words
$(u)$ and $(v)$. In fact, $(u) \equiv (v)$ if and only if $v$ is visually a cyclic shift
of $u$.

\begin{definition}
\label{def:alternating}
{\rm (1) Let $v$ be a reduced word in
$\{a,b\}$. Decompose $v$ into
\[
v \equiv v_1 v_2 \cdots v_t,
\]
where, for each $i=1, \dots, t-1$, all letters in $v_i$ have positive (resp., negative) exponents,
and all letters in $v_{i+1}$ have negative (resp., positive) exponents.
Then the sequence of positive integers
$S(v):=(|v_1|, |v_2|, \dots, |v_t|)$ is called the {\it $S$-sequence of $v$}.

(2) Let $(v)$ be a cyclic word in
$\{a, b\}$. Decompose $(v)$ into
\[
(v) \equiv (v_1 v_2 \cdots v_t),
\]
where all letters in $v_i$ have positive (resp., negative) exponents,
and all letters in $v_{i+1}$ have negative (resp., positive) exponents (taking
subindices modulo $t$). Then the {\it cyclic} sequence of positive integers
$CS(v):=\lp |v_1|, |v_2|, \dots, |v_t| \rp$ is called
the {\it cyclic $S$-sequence of $(v)$}.
Here the double parentheses denote that the sequence is considered modulo
cyclic permutations.

(3) A reduced word $v$ in $\{a,b\}$ is said to be {\it alternating}
if $a^{\pm 1}$ and $b^{\pm 1}$ appear in $v$ alternately,
i.e., neither $a^{\pm2}$ nor $b^{\pm2}$ appears in $v$.
A cyclic word $(v)$ is said to be {\it alternating}
if all cyclic permutations of $v$ are alternating.
In the latter case, we also say that $v$ is {\it cyclically alternating}.
}
\end{definition}

\begin{definition}
\label{def4.1(3)}
{\rm
For a rational number $r$ with $0<r\le 1$,
let $u_r$ be the word defined in the beginning of this section.
Then the symbol $S(r)$ (resp., $CS(r)$) denotes the
$S$-sequence $S(u_r)$ of $u_r$
(resp., cyclic $S$-sequence $CS(u_r)$ of $(u_r)$), which is called
the {\it S-sequence of slope $r$}
(resp., the {\it cyclic S-sequence of slope $r$}).}
\end{definition}

In the remainder of this section, we suppose that $r$ is a rational number
with $0<r\le1$, and write $r$ as a continued fraction expansion:
\begin{center}
\begin{picture}(230,70)
\put(0,48){$\displaystyle{
r=[m_1,m_2, \dots,m_k]:=
\cfrac{1}{m_1+
\cfrac{1}{ \raisebox{-5pt}[0pt][0pt]{$m_2 \, + \, $}
\raisebox{-10pt}[0pt][0pt]{$\, \ddots \ $}
\raisebox{-12pt}[0pt][0pt]{$+ \, \cfrac{1}{m_k}$}
}},}$}
\end{picture}
\end{center}
where $k \ge 1$, $(m_1, \dots, m_k) \in (\mathbb{Z}_+)^k$ and
$m_k \ge 2$ unless $k=1$. For brevity, we write $m$ for $m_1$.

\begin{lemma} [{\cite[Proposition ~4.3]{lee_sakuma}}]
\label{lem:properties}
The following hold.
\begin{enumerate}[\indent \rm (1)]
\item Suppose $k=1$, i.e., $r=1/m$.
Then $S(r)=(m,m)$.

\item Suppose $k\ge 2$. Then each term of $S(r)$ is either $m$ or $m+1$,
and $S(r)$ begins with $m+1$ and ends with $m$.
Moreover, the following hold.

\begin{enumerate}[\rm (a)]
\item If $m_2=1$, then no two consecutive terms of $S(r)$ can be $(m, m)$,
so there is a sequence of positive integers $(t_1,t_2,\dots,t_s)$ such that
\[
S(r)=(t_1\langle m+1\rangle, m, t_2\langle m+1\rangle, m, \dots,
t_s\langle m+1\rangle, m).
\]
Here, the symbol ``$t_i\langle m+1\rangle$'' represents $t_i$ successive $m+1$'s.

\item If $m_2 \ge 2$, then no two consecutive terms of $S(r)$ can be $(m+1, m+1)$,
so there is a sequence of positive integers $(t_1,t_2,\dots,t_s)$ such that
\[
S(r)=(m+1, t_1\langle m\rangle, m+1, t_2\langle m\rangle,
\dots,m+1, t_s\langle m\rangle).
\]
Here, the symbol ``$t_i\langle m\rangle$'' represents $t_i$ successive $m$'s.
\end{enumerate}
\end{enumerate}
\end{lemma}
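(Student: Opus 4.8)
The plan is to reduce the statement to a purely combinatorial analysis of the sign sequence $(\epsilon_1,\dots,\epsilon_{p-1})$, where $r=q/p$ in lowest terms and $\epsilon_i=(-1)^{\lfloor iq/p\rfloor}$. Since $S(r)$ records the lengths of the maximal subwords of $u_r$ whose letters share a common exponent sign, and since $u_r$ has the form $a\,\hat{u}_{q/p}\,c\,\hat{u}_{q/p}^{-1}$ with connecting letter $c=b^{(-1)^q}$ or $c=a^{-1}$ according to the parity of $p$, the sign pattern of $u_r$ is completely determined by $(\epsilon_i)$ together with the leading letter $a^{+1}$ and the single letter $c$. So the first step is to express the terms of $S(r)$ as the lengths of the constant-sign runs of $(\epsilon_i)$, adjusted at the two ends (by the leading $a$ and by the reversal in $\hat{u}_{q/p}^{-1}$) and in the middle (where $c$ may merge or split a run).

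Second, I would analyse these run lengths. Because $0<q/p\le 1$ and $\gcd(p,q)=1$, consecutive floors satisfy $\lfloor (i+1)q/p\rfloor-\lfloor iq/p\rfloor\in\{0,1\}$, so a sign change of $(\epsilon_i)$ occurs between indices $i$ and $i+1$ exactly when $iq/p$ crosses an integer; the run lengths are precisely the gaps between consecutive crossings. By the three-distance (Steinhaus) theorem, and since $m=m_1=\lfloor p/q\rfloor$ when $k\ge 2$, these gaps take only the two values $m$ and $m+1$, which gives the first assertion of (2). The case $k=1$, i.e.\ $r=1/m$ with $p=m,q=1$, is a direct computation: all $\epsilon_i=+1$ and one reads off $S(r)=(m,m)$, establishing (1). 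The boundary behaviour in (2) falls out of the same reading: since $\lfloor q/p\rfloor=0$ one has $\epsilon_1=\dots=\epsilon_m=+1$ and $\epsilon_{m+1}=-1$, so the leading $a$ together with these $m$ positive letters forms an initial block of length $m+1$; applying the identical count to the terminal segment $\hat{u}_{q/p}^{-1}$ shows the final block has length exactly $m$ (the run is capped at $m$ before meeting a letter of opposite sign, independently of $c$).

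Third, and this is the \emph{crux}, I would pin down the adjacency constraints of (a) and (b): the impossibility of $(m,m)$ as consecutive terms when $m_2=1$, and of $(m+1,m+1)$ when $m_2\ge 2$. The coarse estimate above does not see this, so a finer reading of the crossing pattern is required. The cleanest route is induction on the continued-fraction length $k$, via a renormalisation relating the run-length pattern of $q/p=[m_1,\dots,m_k]$ to that of $[m_2,\dots,m_k]$ by a substitution on the two symbols $\{m,m+1\}$; the second digit $m_2$ then dictates which symbol occurs only in singletons. The underlying intuition is that the fractional part $1/(m_2+\cdots)$ of $p/q$ exceeds $1/2$ exactly when $m_2=1$, so that $m+1$-blocks form the majority (and $m$-blocks are isolated) precisely in case (a), and dually in case (b). Once the singleton property is established, extracting the decomposition with the auxiliary sequence $(t_1,\dots,t_s)$ is immediate.

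The main obstacle is exactly this last step. The three-distance estimate yields the value $\{m,m+1\}$ of each block at once, but the exact interleaving — and in particular the exclusion of the forbidden adjacent pair — depends on the second-order structure of the mechanical sequence $(\epsilon_i)$ and hence on $m_2$. Making the renormalisation/substitution rule precise and checking that it is compatible with the two boundary conditions (the forced initial $m+1$ and terminal $m$) is where the real work lies; the remaining bookkeeping is routine.
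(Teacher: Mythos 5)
The paper itself offers no proof of this lemma---it is imported verbatim from \cite[Proposition~4.3]{lee_sakuma}---so the comparison is against the cited argument rather than anything in this text. Your reduction is the right one and matches that argument in spirit: translate $S(r)$ into the run-length structure of the mechanical sequence $\epsilon_i=(-1)^{\lfloor iq/p\rfloor}$; obtain the two-value property $\{m,m+1\}$ from the fact that consecutive floors differ by $0$ or $1$ together with $\lfloor p/q\rfloor=m_1$ for $k\ge 2$; read off the initial block of length $m+1$ (the leading $a$ plus the $m$ initial positive $\epsilon_i$) and the terminal block of length $m$ (the reversed initial run inside $\hat{u}_{q/p}^{-1}$); and handle $k=1$ by direct computation. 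Your diagnosis of the dichotomy in (a) and (b)---the fractional part $1/(m_2+\cdots)$ of $p/q$ exceeds $1/2$ exactly when $m_2=1$, so the corresponding symbol of $\{m,m+1\}$ occurs only in singletons---is also the correct mechanism.

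The gap is that the step carrying essentially all of the content of the lemma, namely the exclusion of $(m,m)$ when $m_2=1$ and of $(m+1,m+1)$ when $m_2\ge 2$, is described but not proved. The ``renormalisation/substitution on the two symbols'' you defer is, once made precise, exactly the statement $CS(\tilde r)=CT(r)$ of Lemma~\ref{lem:induction1} (Proposition~4.4 of the cited paper, which is proved there together with this proposition by induction on $k$); your proposal stops at the point where that work begins. Two concrete items must be supplied before the conclusion follows. First, the derived-sequence claim has to be established for the \emph{finite} sequence $(\epsilon_i)_{1\le i\le p-1}$, which is a truncated orbit and so has boundary defects at $i=0$ and $i=p$ that do not appear in the idealised Sturmian picture. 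Second, $S(r)$ is the $S$-sequence of the full word $u_r\equiv a\,\hat{u}_{q/p}\,c\,\hat{u}_{q/p}^{-1}$, so one must check the two junctions---the letter $c$ merging with the reversed terminal run of $\hat{u}_{q/p}$ to form a block of length exactly $m+1$, and the interface between $v_2$-type and $v_1$-type blocks---and verify that neither creates a forbidden adjacent pair nor breaks the singleton property; this is precisely where the normalisation ``begins with $m+1$, ends with $m$'' is consumed. These verifications are elementary, but until they are carried out the decomposition with the auxiliary sequence $(t_1,\dots,t_s)$ is not established.
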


\begin{definition}
\label{def_T(r)}
{\rm
If $k\ge 2$, the symbol $T(r)$ denotes the sequence
$(t_1,t_2,\dots,t_s)$ in Lemma ~\ref{lem:properties},
which is called the {\it $T$-sequence of slope $r$}.
The symbol $CT(r)$ denotes the cyclic
sequence represented by $T(r)$, which is called the
{\it cyclic $T$-sequence of slope $r$}.
}
\end{definition}

\begin{lemma} [{\cite[Proposition ~4.4 and Corollary ~4.6]{lee_sakuma}}]
\label{lem:induction1}
Let $\tilde{r}$ be the rational number defined as
\[
\tilde{r}=
\begin{cases}
[m_3, \dots, m_k] & \text{if $m_2=1$};\\
[m_2-1, m_3, \dots, m_k] & \text{if $m_2 \ge 2$}.
\end{cases}
\]
Then we have $CS(\tilde{r})=CT(r)$.
\end{lemma}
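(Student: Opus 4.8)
The plan is to prove the stronger statement that the exponent-sign sequence defining $u_r$ renormalizes, under a single step of the continued fraction algorithm, to the one defining $u_{\tilde r}$, and then to read $CT(r)$ and $CS(\tilde r)$ off this renormalization. Throughout I work with the signs $\epsilon_i=(-1)^{\lfloor iq/p\rfloor}$, recalling that $S(r)$ is precisely the sequence of lengths of the maximal constant-sign blocks of $u_r$, and that by Lemma~\ref{lem:properties} these lengths take only the two values $m$ and $m+1$. The point is that $T(r)$ is a second-order run-length encoding of this data: by Lemma~\ref{lem:properties}(2) one of the two values $m$, $m+1$ occurs only as an isolated separator in $S(r)$, and $T(r)$ records the lengths of the maximal runs of the other value. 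The goal is therefore to identify this second-order encoding, as a cyclic sequence, with the first-order encoding $S(\tilde r)$ of the renormalized slope.

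First I would establish a substitution recursion relating $u_r$ to $u_{\tilde r}$ (the analogue of the cited \cite[Proposition~4.4]{lee_sakuma}). Concretely, I would show that the maximal runs of the non-separator value in $S(r)$ are in length-preserving bijection with the maximal constant-sign blocks of $u_{\tilde r}$, while the isolated separator terms of $S(r)$ correspond to the sign changes of $u_{\tilde r}$. The arithmetic heart is an elementary identity for the floor function: descending one step of the continued fraction expansion of $r=[m,m_2,\dots,m_k]$ rewrites the parity pattern of $\lfloor iq/p\rfloor$ in terms of the analogous data for $\tilde r$, placing a separator term of $S(r)$ exactly where the sign of the sequence for $\tilde r$ changes. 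The two cases $m_2=1$ and $m_2\ge 2$ must be separated at this stage, since they govern whether $m$ or $m+1$ is the isolated value, and correspondingly whether the descent deletes the partial quotient $m_2=1$ or merely decrements it to $m_2-1$; this is precisely the dichotomy recorded in the definition of $\tilde r$.

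Granting the recursion, the identity $CS(\tilde r)=CT(r)$ is immediate from the length-preserving bijection (the analogue of \cite[Corollary~4.6]{lee_sakuma}): the lengths of the matched blocks are on one side the terms of $T(r)$ and on the other the terms of $S(\tilde r)$. The delicate point is the passage to cyclic sequences, for the linear sequences $T(r)$ and $S(\tilde r)$ need not be visually equal --- they agree only after a cyclic shift. I expect this boundary bookkeeping, rather than the bulk self-similarity, to be the main obstacle. The discrepancy is concentrated at the two ends of $u_r$ and at the junctions dictated by the explicit form $u_r=a\,\hat u_r\, c\,\hat u_r^{-1}$, where $c=a^{-1}$ if $p$ is even and $c=b^{(-1)^q}$ if $p$ is odd. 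One must check that, upon forming the cyclic word, the head block and the tail block fuse into single runs of the correct lengths, so that no spurious term is created or lost in $CT(r)$. The symmetry of $\hat u_r$ coming from the relation $\epsilon_{p-i}=(-1)^{q-1}\epsilon_i$, together with the central placement of the letter $c$, supplies the control needed to match these endpoint blocks; verifying this in both parities of $p$ finishes the proof.
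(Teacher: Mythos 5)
This lemma is stated in the paper without proof --- it is imported verbatim from \cite[Proposition~4.4 and Corollary~4.6]{lee_sakuma} --- so there is no in-paper argument to compare against. Your sketch reconstructs the standard renormalization argument for the cutting-sequence structure of $u_r$ (one continued-fraction descent step turns the second-order run-length encoding $T(r)$ of $S(r)$ into the first-order encoding $S(\tilde r)$), which is essentially the route taken in the cited source; the case split on $m_2$, the identification of which of $m$, $m+1$ is the isolated separator, and the cyclic-shift bookkeeping at the junctions of $u_r\equiv a\,\hat u_r\,c\,\hat u_r^{-1}$ that you flag are indeed the correct delicate points, so the proposal is sound as a proof outline.
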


\begin{lemma} [{\cite[Proposition ~4.5]{lee_sakuma}}]
\label{lem:sequence}
The sequence $S(r)$ has a decomposition $(S_1, S_2, S_1, S_2)$ which satisfies the following.
\begin{enumerate} [\indent \rm (1)]
\item Each $S_i$ is symmetric,
i.e., the sequence obtained from $S_i$ by reversing the order is
equal to $S_i$. {\rm (}Here, $S_1$ is empty if $k=1$.{\rm )}

\item Each $S_i$ occurs only twice in
the cyclic sequence $CS(r)$.

\item The subsequence $S_1$ begins and ends with $m+1$.

\item The subsequence $S_2$ begins and ends with $m$.
\end{enumerate}
\end{lemma}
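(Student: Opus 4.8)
The plan is to deduce the decomposition from two symmetries of the exponent-sign sequence $\Sigma=(\sigma_0,\dots,\sigma_{2p-1})$ of $u_r=a\,\hat u_{q/p}\,c\,\hat u_{q/p}^{-1}$, where $c=b^{(-1)^q}$ or $a^{-1}$ according to the parity of $p$, and $S(r)$ records the lengths of the maximal constant-sign runs of $\Sigma$. The one arithmetic input is the identity $\lfloor (p-i)q/p\rfloor=q-1-\lfloor iq/p\rfloor$ for $0<i<p$ (valid since $p\nmid iq$), which gives $\epsilon_{p-i}=(-1)^{q-1}\epsilon_i$; thus $(\epsilon_1,\dots,\epsilon_{p-1})$ is a palindrome if $q$ is odd and an antipalindrome if $q$ is even, the case $p$ even forcing $q$ odd. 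Feeding this into $\Sigma$ shows that the half-shift $i\mapsto i+p$ multiplies every sign by $(-1)^q$, and in either parity preserves run lengths. Since one checks $\sigma_{2p-1}\neq\sigma_0$ and $\sigma_{p-1}\neq\sigma_p$ directly, there are honest run boundaries at $0$ and at $p$, so the run sequence of $\sigma_0\cdots\sigma_{p-1}$ equals that of $\sigma_p\cdots\sigma_{2p-1}$; writing this common sequence $S'$ yields $S(r)=(S',S')$, forcing an even number of runs.

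For the palindromic refinement I would induct on the length $k$ of the continued fraction, using $CT(r)=CS(\tilde r)$ (Lemma~\ref{lem:induction1}) for the strictly shorter $\tilde r$. The base case $k=1$ is immediate: $S(1/m)=(m,m)$, so $S_1=\emptyset$ and $S_2=(m)$. For the inductive step, the hypothesis gives $S(\tilde r)=(\tilde S_1,\tilde S_2,\tilde S_1,\tilde S_2)$ with $\tilde S_i$ palindromic, hence $CT(r)=CS(\tilde r)$ carries this four-block palindromic form; in particular $T(r)=(T',T')$. I would then recover $S(r)$ from $T(r)$ through the substitution of Lemma~\ref{lem:properties} --- replacing each entry $t$ of $T(r)$ by $(m+1,m^{t})$ when $m_2\ge 2$, or by $((m+1)^{t},m)$ when $m_2=1$ --- and show this substitution intertwines the symmetric decomposition of $CT(r)$ with one of $S(r)$, interchanging the roles of the two boundary values. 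After choosing the correct cyclic root (so that the linear $S(r)$ begins with $m+1$), this produces palindromic $S_1,S_2$ with $S_1$ beginning and ending in $m+1$ and $S_2$ in $m$, as required by (3) and (4).

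Claim (2) amounts to the symmetry group of the cyclic sequence $CS(r)$ being exactly dihedral of order $4$: a third occurrence of $S_1$ or $S_2$ would produce a strictly finer translational period of $CS(r)$, hence of $CT(r)=CS(\tilde r)$, contradicting the inductive hypothesis for $\tilde r$ (the base case being clear). I expect the main obstacle to be not the coarse splitting but the bookkeeping in the inductive step: verifying that palindromicity survives across the block boundaries created by the substitution, that the boundary values come out as stated, and that the cyclic root is placed correctly all require a case analysis governed by the parities of $p,q$ and by whether $m_2=1$ or $m_2\ge 2$ (the latter deciding whether a reflection axis of $CS(r)$ runs through a length-$(m+1)$ run or through the gap between two length-$m$ runs). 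The degenerate situation $r\equiv\pm 1/p$, where one of the arcs collapses, must be tracked separately throughout.
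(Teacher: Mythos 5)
This lemma is not proved in the paper at all: it is imported from \cite[Proposition~4.5]{lee_sakuma}, and the only trace of its proof visible here is Lemma~\ref{lem:relation}, which is explicitly quoted from the \emph{proof} of that proposition. Measured against that, your strategy coincides with the source's. The coarse splitting $S(r)=(S',S')$ from the identity $\lfloor (p-i)q/p\rfloor=q-1-\lfloor iq/p\rfloor$ is correct (the identity holds since $p\nmid iq$, and your check that honest run boundaries occur at positions $0$ and $p$ is exactly the point that makes the two halves contribute equal run sequences). The refinement by induction on $k$ through $CT(r)=CS(\tilde r)$ (Lemma~\ref{lem:induction1}) and the substitution rules of Lemma~\ref{lem:properties}, with the case split on $m_2=1$ versus $m_2\ge 2$ and the interchange of the roles of $T_1,T_2$, is precisely what Lemma~\ref{lem:relation} records, so for claims (1), (3), (4) your plan is sound and follows the same route.

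The step I would not accept as written is your justification of claim (2). The inference that a third occurrence of $S_1$ or $S_2$ ``would produce a strictly finer translational period of $CS(r)$'' is false as a general combinatorial principle: in a cyclic sequence of the form $(B_1,B_2,B_1,B_2)$, a block equal to $B_1$ can occur at a position not congruent to $0$ modulo $|B_1|+|B_2|$ without the sequence acquiring any shorter period (consider $(1,2,1,3,1,2,1,3)$ with $B_1=(1)$, which occurs four times). What actually has to be shown is that any occurrence of $S_1$ (resp.\ $S_2$) in $CS(r)$ is forced --- by the boundary values $m+1$ and $m$ and by the no-consecutive-repetition constraints of Lemma~\ref{lem:properties}(2) --- to be aligned with the block structure induced by $CT(r)$, so that it descends to an occurrence of the corresponding $T_i$ in $CS(\tilde r)=CT(r)$ and the induction hypothesis can be invoked. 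That alignment argument is exactly the bookkeeping you defer to the end, but for claim (2) it is not bookkeeping: it is the entire content, and without it this part of the proof is incomplete.
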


\begin{lemma} [{\cite[Proof of Proposition ~4.5]{lee_sakuma}}]
\label{lem:relation}
Let $\tilde{r}$ be the rational number defined as in Lemma ~\ref{lem:induction1}.
Also let $S(\tilde{r})=(T_1, T_2, T_1, T_2)$ and $S(r) =(S_1, S_2, S_1, S_2)$
be decompositions described as in Lemma ~\ref{lem:sequence}.
Then the following hold.
\begin{enumerate} [\indent \rm (1)]
\item If $m_2=1$ and $k=3$, then $T_1=\emptyset$, $T_2=(m_3)$,
and $S_1 =(m_3\langle m+1 \rangle)$, $S_2 =(m)$.

\item If $m_2=1$ and $k\ge 4$, then
$T_1=(t_1, \dots, t_{s_1})$, $T_2=(t_{s_1+1}, \dots, t_{s_2})$, and
\[
\begin{aligned}
S_1
&=(t_1 \langle m+1 \rangle, m, t_2 \langle m+1 \rangle,
\dots, t_{s_1-1}\langle m+1 \rangle, m, t_{s_1}\langle m+1 \rangle), \\
S_2 &=(m, t_{s_1+1}\langle m+1\rangle,m, \dots, m, t_{s_2}\langle m+1\rangle, m).
\end{aligned}
\]

\item If $m_2\ge 2$ and
$k=2$, then $T_1=\emptyset$, $T_2=(m_2-1)$,
and $S_1 =(m+1)$, $S_2 =((m_2-1)\langle m \rangle)$.

\item If $m_2 \ge 2$ and $k\ge 3$, then $T_1=(t_1, \dots, t_{s_1})$,
$T_2=(t_{s_1+1}, \dots, t_{s_2})$, and
\[
\begin{aligned}
S_1 &=(m+1, t_{s_1+1}\langle m\rangle, m+1,
\dots, m+1, t_{s_2}\langle m\rangle, m+1),\\
S_2 &=(t_1\langle m\rangle, m+1,t_2\langle m\rangle, \dots,
t_{s_1-1}\langle m\rangle, m+1, t_{s_1}\langle m\rangle).
\end{aligned}
\]
\end{enumerate}
\end{lemma}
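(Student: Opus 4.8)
The plan is to prove the lemma by a single-step substitution: feed the inductive data about $\tilde{r}$ into the run-expansion of $S(r)$ supplied by Lemma~\ref{lem:properties}, and then regroup the resulting string into $(S_1,S_2,S_1,S_2)$. The three inputs are exactly the preceding lemmas. Lemma~\ref{lem:properties}(2) writes $S(r)$ as an alternating string of blocks $t_i\langle m+1\rangle$ (when $m_2=1$) or $t_i\langle m\rangle$ (when $m_2\ge 2$) whose multiplicities are the successive terms of $T(r)$, separated and flanked by single entries $m$ (resp.\ $m+1$). Lemma~\ref{lem:induction1} identifies the cyclic sequence $CT(r)$ with $CS(\tilde{r})$. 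Lemma~\ref{lem:sequence}, applied to $\tilde{r}$, gives $S(\tilde{r})=(T_1,T_2,T_1,T_2)$ with each $T_i$ symmetric and with prescribed first and last terms. Combining these expresses the multiplicities in $S(r)$ through $T_1$ and $T_2$, and the whole proof is then the bookkeeping of cutting $S(r)$ in the right place.

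First I would dispose of the two base cases, $m_2=1,k=3$ and $m_2\ge 2,k=2$. In each, $\tilde{r}=1/m'$ is a one-term continued fraction (with $m'=m_3$ and $m'=m_2-1$ respectively), so Lemma~\ref{lem:properties}(1) gives $S(\tilde{r})=(m',m')$; in the decomposition of Lemma~\ref{lem:sequence} this forces $T_1=\emptyset$ and $T_2=(m')$. Since $CT(r)=CS(\tilde{r})=\lp m',m'\rp$, we get $T(r)=(m',m')$, and inserting $s=2$, $t_1=t_2=m'$ into the relevant formula of Lemma~\ref{lem:properties}(2) and splitting $S(r)$ so that $S_1$ and $S_2$ carry the boundary values $m+1$ and $m$ required by Lemma~\ref{lem:sequence}(3)--(4) yields precisely the asserted $S_1,S_2$.

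For the general cases $m_2=1,k\ge 4$ and $m_2\ge 2,k\ge 3$, the reduced fraction $\tilde{r}$ has at least two continued-fraction terms, so Lemma~\ref{lem:sequence} applies to it and gives $S(\tilde{r})=(T_1,T_2,T_1,T_2)$. From $CT(r)=CS(\tilde{r})$ I would read off the linear sequence $T(r)$ as a cyclic representative of $(T_1,T_2,T_1,T_2)$, substitute its entries into the run-expansion of $S(r)$, and cut the resulting period-$(|T_1|+|T_2|)$ string into $(S_1,S_2,S_1,S_2)$ at the transitions dictated by Lemma~\ref{lem:sequence}(3)--(4) (so that $S_1$ begins and ends with $m+1$ and $S_2$ with $m$). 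Because each separating entry is a single $m$ (resp.\ $m+1$) inserted symmetrically, and because $T_1$ and $T_2$ are themselves symmetric, each $S_i$ comes out symmetric; condition (2) of Lemma~\ref{lem:sequence} then pins this down as the genuine decomposition, and reading off $S_1,S_2$ gives the stated formulas.

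The step I expect to be the real obstacle is not conceptual but the base-point bookkeeping: the hypothesis $CT(r)=CS(\tilde{r})$ is only a \emph{cyclic} identity, whereas to match $S(r)$ literally I must know the correct cyclic representative of the linear sequence $T(r)$ relative to $(T_1,T_2,T_1,T_2)$. This representative is \emph{not} the same in the two families---one is led to $T(r)=(T_1,T_2,T_1,T_2)$ when $m_2=1$ but to $T(r)=(T_2,T_1,T_2,T_1)$ when $m_2\ge 2$---which is exactly why the roles of $T_1$ and $T_2$ in the expressions for $S_1$ and $S_2$ are interchanged between case~(2) and case~(4). Determining this shift (via the explicit word $u_r$ underlying Lemma~\ref{lem:induction1}, together with the requirement from Lemma~\ref{lem:properties}(2) that $S(r)$ begin with $m+1$ and end with $m$) and then placing the boundary $m$'s and $(m+1)$'s so that the symmetry of each $S_i$ is preserved is where the care is concentrated.
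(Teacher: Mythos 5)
First, a remark on the comparison you asked for implicitly: this paper contains no proof of Lemma~\ref{lem:relation} at all --- it is imported from the \emph{proof} of \cite[Proposition~4.5]{lee_sakuma}, where the decomposition of Lemma~\ref{lem:sequence} is \emph{constructed} inductively by exactly the formulas you are trying to derive. So the logical direction in the source is the reverse of yours: there, the formulas of Lemma~\ref{lem:relation} are the inductive step that \emph{produces} Lemma~\ref{lem:sequence}, whereas you take Lemma~\ref{lem:sequence} as given and try to recover Lemma~\ref{lem:relation} from it together with Lemmas~\ref{lem:properties} and~\ref{lem:induction1}. Your base cases (1) and (3) are complete and correct, and you have correctly located the crux of cases (2) and (4): one must identify the correct \emph{linear} representative of $T(r)$ inside the cyclic class $CT(r)=CS(\tilde r)=\lp T_1,T_2,T_1,T_2\rp$, and the answer is $(T_1,T_2,T_1,T_2)$ when $m_2=1$ but $(T_2,T_1,T_2,T_1)$ when $m_2\ge 2$, which is exactly why $T_1$ and $T_2$ trade places between the displayed formulas. (A sanity check that the flip is real: for $r=[1,2,2]=5/7$ one computes $S(r)=(2,1,2,1,1,2,1,2,1,1)$, hence $T(r)=(1,2,1,2)$, while $\tilde r=[1,2]=2/3$ has $S(\tilde r)=(2,1,2,1)=(T_1,T_2,T_1,T_2)$ with $T_1=(2)$, $T_2=(1)$; so $T(r)=(T_2,T_1,T_2,T_1)\ne S(\tilde r)$ as linear sequences.)

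The gap is that you name this alignment problem but do not solve it, and the tools you propose are not sufficient to solve it. The requirement from Lemma~\ref{lem:properties}(2) that $S(r)$ begin with $m+1$ and end with $m$ is satisfied by the run-expansion for \emph{every} choice of $(t_1,\dots,t_s)$, so it cannot distinguish one cyclic representative from another; and $CT(r)=CS(\tilde r)$ is by definition a base-point-free identity, so a priori $T(r)$ could even start in the interior of a copy of $T_1$ or $T_2$ (the entries of $S(\tilde r)$ take only two values, so knowing the first entry $t_1$ does not pin down the rotation either). To close the gap you must do one of two things: either return to the explicit words $u_r$ and $u_{\tilde r}$ and prove a linear refinement of Lemma~\ref{lem:induction1} locating the start of $T(r)$ relative to $(T_1,T_2,T_1,T_2)$ --- which is in effect what \cite{lee_sakuma} does --- or prove a combinatorial uniqueness statement to the effect that a cyclic sequence admits essentially only one period-two decomposition into symmetric blocks each occurring exactly twice, so that the blocks obtained by cutting $S(r)$ at the forced $m{+}1$-to-$m$ and $m$-to-$m{+}1$ transitions must coincide with $T_1$ and $T_2$, with the identification fixed by their prescribed first and last entries. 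Related to this, your argument tacitly assumes the decomposition of Lemma~\ref{lem:sequence} is unique; as stated that lemma asserts only existence, and without uniqueness the phrase ``the decomposition $(S_1,S_2,S_1,S_2)$'' is not well defined and the formulas could a priori fail for some admissible choice. Until one of these routes is actually carried out, the heart of cases (2) and (4) --- which is the entire content of the lemma --- remains unproved.
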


By Lemmas ~\ref{lem:properties} and \ref{lem:relation},
we easily obtain the following corollary.

\begin{corollary}
\label{cor:cor-to-relation}
Let $S(r)=(S_1,S_2,S_1,S_2)$ be as in Lemma ~\ref{lem:sequence}.
Then the following hold.
\begin{enumerate} [\indent \rm (1)]
\item If $m_2=1$, then $(m+1,m+1)$ appears in $S_1$.

\item If $m_2\ge 2$ and if $r \ne [m,2]=2/(2m+1)$,
then $(m,m)$ appears in $S_2$.
\end{enumerate}
\end{corollary}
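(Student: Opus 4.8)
The plan is to prove the two items separately, in each case reading the required consecutive pair directly off the explicit shape of $S_1$ (resp.\ $S_2$) provided by Lemma~\ref{lem:relation}. The common mechanism is this: in both items the relevant subsequence begins with a block $t_1\langle m+1\rangle$ or $t_1\langle m\rangle$, where $t_1$ is the first term of $T_1$; since $S(\tilde r)=(T_1,T_2,T_1,T_2)$, this $t_1$ is nothing but the first term of $S(\tilde r)$, and Lemma~\ref{lem:properties}(2), applied to $\tilde r$, forces that first term to equal $\tilde m+1\ge 2$. Thus the opening block has length at least $2$ and already exhibits the desired repeated entry. The only cases not covered by this uniform argument are the bottoms of the recursion, which I would dispatch by hand using the closed forms in Lemma~\ref{lem:relation}(1) and (3).

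For item~(1) I would first note that $m_2=1$ together with canonical form ($m_k\ge 2$) forces $k\ge 3$. When $k=3$, Lemma~\ref{lem:relation}(1) gives $S_1=(m_3\langle m+1\rangle)$ with $m_3=m_k\ge 2$, so $(m+1,m+1)$ appears. When $k\ge 4$, $\tilde r=[m_3,\dots,m_k]$ has at least two partial quotients and is in canonical form, so Lemma~\ref{lem:properties}(2) gives $\tilde m+1=m_3+1\ge 2$ as the first term of $S(\tilde r)$; the uniform argument then places $(m+1,m+1)$ at the start of the block $t_1\langle m+1\rangle$ heading $S_1$ in Lemma~\ref{lem:relation}(2).

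For item~(2), assume $m_2\ge 2$ and $r\ne[m,2]$. When $k\ge 3$ the uniform argument applies verbatim to $\tilde r=[m_2-1,m_3,\dots,m_k]$, whose first $S$-term is $\tilde m+1=(m_2-1)+1=m_2\ge 2$, putting $(m,m)$ at the start of the block $t_1\langle m\rangle$ heading $S_2$ in Lemma~\ref{lem:relation}(4). The delicate case is the base $k=2$, where $S_2=((m_2-1)\langle m\rangle)$ by Lemma~\ref{lem:relation}(3): this contains $(m,m)$ precisely when $m_2-1\ge 2$, i.e.\ $m_2\ge 3$, and the borderline value $m_2=2$ is exactly the excluded slope $r=[m,2]$. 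This is the one point I expect to require care, since it is where the hypothesis $r\ne[m,2]$ is both used and seen to be sharp; throughout I would also check that $\tilde r$ stays in canonical form with at least two partial quotients, so that Lemma~\ref{lem:properties}(2) legitimately applies and $T_1$ is nonempty.
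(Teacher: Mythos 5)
Your proposal is correct and is essentially the argument the paper intends: the paper gives no written proof beyond the remark that the corollary follows from Lemmas~\ref{lem:properties} and \ref{lem:relation}, and your case split (base cases $k=3$ resp.\ $k=2$ via Lemma~\ref{lem:relation}(1),(3), and the leading block $t_1\langle m+1\rangle$ resp.\ $t_1\langle m\rangle$ with $t_1=\tilde m+1\ge 2$ otherwise) supplies exactly the omitted details, including the correct identification of $r=[m,2]$ as the sharp excluded case.
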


\section{Small cancellation theory}
\label{sec:small_cancellation_theory}

Let $F(X)$ be the free group with basis $X$. A subset $R$ of $F(X)$
is said to be {\it symmetrized},
if all elements of $R$ are cyclically reduced and, for each $w \in R$,
all cyclic permutations of $w$ and $w^{-1}$ also belong to $R$.

\begin{definition}
{\rm Suppose that $R$ is a symmetrized subset of $F(X)$.
A nonempty word $b$ is called a {\it piece} if there exist distinct $w_1, w_2 \in R$
such that $w_1 \equiv bc_1$ and $w_2 \equiv bc_2$.
The small cancellation conditions $C(p)$ and $T(q)$,
where $p$ and $q$ are integers such that $p \ge 2$ and $q \ge 3$,
are defined as follows (see \cite{lyndon_schupp}).
\begin{enumerate}[\indent \rm (1)]
\item Condition $C(p)$: If $w \in R$
is a product of $t$ pieces, then $t \ge p$.

\item Condition $T(q)$: For $w_1, \dots, w_t \in R$
with no successive elements $w_i, w_{i+1}$
an inverse pair $(i$ mod $t)$, if $t < q$, then at least one of the products
$w_1 w_2,\dots,$ $w_{t-1} w_t$, $w_t w_1$ is freely reduced without cancellation.
\end{enumerate}
}
\end{definition}

We recall the following lemma from \cite{lee_sakuma},
which concerns the word $u_r$ defined in the beginning of
Section ~\ref{group_presentation}.

\begin{lemma} [{\cite[Lemma ~5.3]{lee_sakuma}}]
\label{maximal_piece}
Suppose that $r$ is a rational number with $0<r<1$,
and write $r=[m_1, m_2, \dots, m_k]$,
where $k \ge 1$, $(m_1, \dots, m_k) \in (\mathbb{Z}_+)^k$ and
$m_k \ge 2$.
Let $S(r)=(S_1, S_2, S_1, S_2)$ be as in Lemma ~\ref{lem:sequence}.
Decompose
\[
u_r \equiv v_1 v_2 v_3 v_4,
\]
where $S(v_1)=S(v_3)=S_1$ and $S(v_2)=S(v_4)=S_2$.
Then the following hold.
\begin{enumerate}[\indent \rm (1)]
\item If $k=1$, then the following hold.

\begin{enumerate}[\rm (a)]
\item No piece can contain $v_2$ or $v_4$.

\item No piece is of the form
$v_{2e} v_{4b}$ or $v_{4e} v_{2b}$,
where $v_{ib}$ and $v_{ie}$ are nonempty initial and terminal subwords of $v_i$, respectively.

\item Every subword of the form $v_{2b}$, $v_{2e}$, $v_{4b}$, or $v_{4e}$ is a piece,
where $v_{ib}$ and $v_{ie}$ are nonempty initial and terminal subwords of $v_i$ with $|v_{ib}|, |v_{ie}| \le |v_i|-1$, respectively.
\end{enumerate}

\item If $k \ge 2$, then the following hold.

\begin{enumerate}[\rm (a)]
\item No piece can contain $v_1$ or $v_3$.

\item No piece is of the form
$v_{1e} v_2 v_{3b}$ or $v_{3e} v_4 v_{1b}$,
where $v_{ib}$ and $v_{ie}$ are nonempty initial and terminal subwords of $v_i$, respectively.

\item Every subword of the form $v_{1e} v_2$, $v_2 v_{3b}$, $v_{3e} v_4$, or $v_4 v_{1b}$ is a piece,
where $v_{ib}$ and $v_{ie}$ are nonempty initial and terminal subwords of $v_i$ with $|v_{ib}|, |v_{ie}| \le |v_i|-1$, respectively.
\end{enumerate}
\end{enumerate}
\end{lemma}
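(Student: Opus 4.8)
The plan is to translate the notion of a \emph{piece} into a statement about subwords that occur in more than one way among the cyclic permutations of $u_r$ and of $u_r^{-1}$, and then to control such repetitions through the decomposition $S(r)=(S_1,S_2,S_1,S_2)$ of Lemma \ref{lem:sequence}, whose decisive feature is that each of $S_1,S_2$ occurs \emph{exactly twice} in $CS(r)$. First I would record that the symmetrized set $R$ determined by the relator is the set of all cyclic permutations of $u_r^{\pm1}$, that $u_r$ is cyclically reduced with cyclic $S$-sequence $CS(r)$, and that $u_r$ is cyclically alternating in the sense of Definition \ref{def:alternating} (the two generators alternate, although the exponents need not). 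Consequently a reduced subword is pinned down by its first letter, its first exponent, and its $S$-sequence, so that ``$b$ is a piece'' unwinds into the assertion that the $S$-data of $b$ occur at two distinct cyclic positions of $CS(r)$, possibly with one occurrence arising from $u_r^{-1}$, whose cyclic $S$-sequence coincides with $CS(r)$ because $S_1,S_2$ are symmetric by Lemma \ref{lem:sequence}(1).

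The cleanest step is the pair of assertions that no piece can contain a full copy of the \emph{rare} block: namely $v_1$ or $v_3$ (of $S$-sequence $S_1$) when $k\ge2$, and $v_2$ or $v_4$ (of $S$-sequence $S_2$) when $k=1$, where $S_1=\emptyset$ so the $S_2$-blocks are the only long ones. Indeed, if a piece contained such a block, then by Lemma \ref{lem:sequence}(2) the ambient position of its $S$-sequence inside $CS(r)$ is determined up to the two allowed occurrences, and these two occurrences are interchanged by the symmetry of $(u_r)$; a short check then shows the word cannot be a common initial segment of two \emph{distinct} members of $R$, which gives (1)(a) and (2)(a).

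For the forbidden-form assertions (1)(b),(2)(b) and the realization assertions (1)(c),(2)(c) the $S$-sequence dictionary no longer suffices, and one must compare actual letters. The realizations I would prove by exhibiting, for each candidate $v_{1e}v_2$, $v_2v_{3b}$, $v_{3e}v_4$, $v_4v_{1b}$ (and their $k=1$ analogues $v_{2b}$, $v_{2e}$, $v_{4b}$, $v_{4e}$), two genuinely different elements of $R$ that share it as an initial segment; here the second copy is produced from the twin occurrence of $S_2$ in $CS(r)$ or, where the signs demand it, from a cyclic permutation of $u_r^{-1}$, and the length bounds $|v_{ib}|,|v_{ie}|\le|v_i|-1$ are exactly what keeps the shared word strictly shorter than a full rare block, so that a second occurrence survives. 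The forbidden forms $v_{1e}v_2v_{3b}$ and $v_{3e}v_4v_{1b}$ are then excluded by showing that adjoining even one further run forces a full rare block's worth of determination, so that the would-be second occurrence breaks down at the letter level.

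The main obstacle will be precisely this letter-level boundary analysis: $v_{1e}v_2$ is a piece while $v_{1e}v_2v_{3b}$ is not, even though their $S$-sequences continue to agree at two positions, so the distinction is visible only by tracking the actual generators and exponents past the block boundary. Because the relator carries an internal symmetry, with $u_r\equiv a\hat u\,b^{\pm1}\hat u^{-1}$ or $u_r\equiv a\hat u\,a^{-1}\hat u^{-1}$ tying $(u_r)$ closely to $(u_r^{-1})$, spurious near-repetitions are easy to manufacture, and ruling all of them out requires the sharp data of Lemma \ref{lem:sequence}: the symmetry of $S_1,S_2$, the fact that $S_1$ begins and ends with $m+1$ while $S_2$ begins and ends with $m$, together with the presence of $(m+1,m+1)$ in $S_1$ or of $(m,m)$ in $S_2$ supplied by Corollary \ref{cor:cor-to-relation}, which act as markers forcing any genuine repetition to respect the block structure.
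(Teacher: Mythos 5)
First, a point of comparison: the paper does not prove this lemma at all --- it is imported verbatim from \cite[Lemma ~5.3]{lee_sakuma} (``We recall the following lemma from ...'') and used as a black box, so there is no in-paper proof to measure your argument against. Judged on its own terms, your outline does identify the right machinery: the translation of ``piece'' into ``subword occurring as an initial segment of two distinct elements of $R$, i.e.\ at two distinct positions of the relevant cyclic words''; the observation that for cyclically alternating words such occurrences are controlled by the $S$-sequence together with the first letter and its sign; and the decisive input from Lemma ~\ref{lem:sequence} that each $S_i$ is symmetric and occurs only twice in $CS(r)$, so that the cyclic $S$-sequence of the inverse word coincides with $CS(r)$. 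This is indeed the skeleton of the argument in the cited source.

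As a proof, however, the proposal has a genuine gap, and you name it yourself: the ``letter-level boundary analysis'' needed for (1)(b), (1)(c), (2)(b), (2)(c) is never carried out, and that is precisely where all the content of the lemma lives. For (2)(c) you must actually exhibit, for each word $v_{1e}v_2$, $v_2v_{3b}$, $v_{3e}v_4$, $v_4v_{1b}$, a second element of $R$ beginning with it; saying that the second copy ``is produced from the twin occurrence of $S_2$'' is not yet an argument, because the twin occurrence only matches $S$-sequences --- one must verify that the actual letters (which generator, which exponent) agree at the shifted or inverted position, and this depends on the parity bookkeeping in $u_r\equiv a\hat u_r b^{(-1)^q}\hat u_r^{-1}$ (resp.\ $a\hat u_r a^{-1}\hat u_r^{-1}$). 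For (2)(b) you must show that \emph{every} candidate second occurrence of $v_{1e}v_2$ dies once $v_{3b}$ is appended; that requires first locating all such occurrences (finitely many, coming from the $v_{3e}v_4$ position and from the inverse word) and then comparing the next letter, and the sentence ``adjoining even one further run forces a full rare block's worth of determination'' restates the claim rather than establishing it. The same applies to the ``short check'' invoked for (1)(a) and (2)(a). A smaller but real issue: in this paper the symmetrized set $R$ is generated by $u_r^{n}$, not by $u_r$, so for the positive assertions (c) the two elements of $R$ you produce must be genuinely distinct cyclic permutations of $u_r^{\pm n}$; cyclic shifts by multiples of $|u_r|$ yield visually identical words and do not count as distinct, a point your outline (which works entirely with $u_r^{\pm1}$) does not address.
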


By using the above lemma,
we establish the following key lemma concerning the cyclic word $(u_r^n)$,
where $u_r^n$ is the single relator of the presentation
$\Hecke(r;n)=\langle a, b \svert u_r^n \rangle$.

\begin{lemma}
\label{maximal_piece2}
Suppose that $r$ is a rational number with $0<r<1$,
and write $r=[m_1, m_2, \dots, m_k]$,
where $k \ge 1$, $(m_1, \dots, m_k) \in (\mathbb{Z}_+)^k$ and
$m_k \ge 2$.
Decompose $u_r \equiv v_1 v_2 v_3 v_4$ as in Lemma ~\ref{maximal_piece}.
Then for the relator $u_r^n \equiv (v_1 v_2 v_3 v_4)^n$,
where $n \ge 2$ is an integer,
the following hold.
\begin{enumerate}[\indent \rm (1)]
\item
The cyclic word $(u_r^n)$ is not a product of $t$ pieces with $t\le 4n-1$.
\item
Let $w$ be a subword of the cyclic word $(u_r^n)$
which is a product of $4n-1$ pieces
but is not a product of $t$ pieces with $t<4n-1$.
Then $w$ contains a subword, $w'$,
such that $S(w')=((2n-1)\langle S_1,S_2\rangle, \ell)$
or $S(w')=(\ell, (2n-1)\langle S_2,S_1\rangle)$,
where $S(r)=(S_1,S_2,S_1,S_2)$ and $\ell\in\ZZ_+$.
\end{enumerate}
\end{lemma}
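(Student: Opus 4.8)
The plan is to reduce both assertions to the piece-structure of the single relator $u_r^n\equiv(v_1v_2v_3v_4)^n$, using Lemma~\ref{maximal_piece} as the engine that controls where pieces can start and end. The guiding principle is that a piece cannot ``swallow'' a full copy of the symmetric block $v_1$ (when $k\ge 2$) or $v_2$ (when $k=1$), so to cover a long stretch of the cyclic word one is forced to break it at predictable places, and each such break costs one piece. The combinatorics of $S(r)=(S_1,S_2,S_1,S_2)$, together with the fact (Lemma~\ref{lem:sequence}) that $S_1$ and $S_2$ each occur exactly twice in $CS(r)$, is what turns ``forced breaks'' into a sharp count.

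For part~(1), I would argue by contradiction: suppose $(u_r^n)$ is a product of $t\le 4n-1$ pieces. Reading around the cyclic word $(u_r^n)\equiv(v_1v_2v_3v_4)^n$, there are $4n$ distinguished subwords of the protected type (the $v_1$'s and $v_3$'s if $k\ge 2$, or the $v_2$'s and $v_4$'s if $k=1$). By Lemma~\ref{maximal_piece}(1a)/(2a) no single piece can contain any one of these protected subwords, so each piece boundary must fall inside or adjacent to them; a counting argument then shows that covering all $4n$ protected blocks with pieces that each contain none of them in full requires strictly more than $4n-1$ pieces. The cleanest way to make this rigorous is to assign to each piece the set of protected blocks it meets and show each piece ``uses up'' at most one inter-block junction, so that $t$ pieces can span at most $t$ junctions, whereas the cyclic word has $4n$ of them; hence $t\ge 4n$, contradicting $t\le 4n-1$.

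For part~(2), I would take $w$ as given — a geodesic product of exactly $4n-1$ pieces that is not a product of fewer — and locate the long alternating run it is forced to contain. Since $w$ realizes the extremal value $4n-1$ but sits inside the cyclic word whose minimal piece-covering needs $4n$ pieces, $w$ must be ``almost all'' of one full period: it spans $4n-1$ of the $4n$ junctions, omitting exactly one. Translating this back through the decomposition $u_r\equiv v_1v_2v_3v_4$ and the $S$-sequence, the omission of a single junction forces $w$ to contain $2n-1$ consecutive full copies of the pattern $\langle S_1,S_2\rangle$ (or of $\langle S_2,S_1\rangle$, depending on which end the leftover partial block sits), capped by one additional incomplete block of length $\ell\in\ZZ_+$. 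This yields precisely $S(w')=((2n-1)\langle S_1,S_2\rangle,\ell)$ or $(\ell,(2n-1)\langle S_2,S_1\rangle)$ after extracting the appropriate subword $w'$.

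I expect the main obstacle to be part~(2): pinning down \emph{which} junction is omitted and showing that the resulting $w'$ has \emph{exactly} the claimed $S$-sequence rather than merely containing $2n-1$ periods up to boundary ambiguity. The difficulty is bookkeeping at the two ends of $w$, where a piece may straddle a $v_iv_{i+1}$ transition and the symmetry of $S_1,S_2$ makes the two candidate forms genuinely distinct cases; I would handle this by carefully invoking the ``every subword of the form $v_{1e}v_2$, $v_2v_{3b}$, etc.\ is a piece'' clauses of Lemma~\ref{maximal_piece}(1c)/(2c) to certify the extremal covering and then reading off $S(w')$ directly from the forced block structure.
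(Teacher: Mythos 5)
Your overall strategy is the paper's: both parts are driven by Lemma~\ref{maximal_piece}, part~(1) by forcing each of the $4n$ blocks of the form $v_1,v_2,v_3,v_4$ in $(v_1v_2v_3v_4)^n$ to absorb a piece boundary, and part~(2) by observing that a minimal $(4n-1)$-piece covering of $w$ places exactly one break in each block, which pins $w$ down as $v_{1e}(v_2v_3v_4v_1)^{n-1}v_2v_3v_{4b}$ (or the analogue when the initial break sits in a $v_2$, $v_3$ or $v_4$), after which $w'$ and its $S$-sequence are read off by trimming the two ends exactly as you anticipate. The end-of-word bookkeeping you flag as the main obstacle is indeed where the paper spends its effort in (2), and it is resolved by the uniqueness of the break in each block, which follows from Lemma~\ref{maximal_piece}(2-c) together with minimality.

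There is, however, one step that would fail as you propose to make it rigorous: the junction count in part~(1). You argue that each piece spans at most one of the $4n$ junctions between consecutive blocks, so $t$ pieces span at most $t$ junctions, hence $t\ge 4n$. But a junction need not be spanned by any piece: it can itself be a break point (Lemma~\ref{maximal_piece}(2-c) shows that $v_{1e}v_2$ and $v_2v_{3b}$ are pieces, so breaks do legitimately occur at junctions). Accounting for this, each junction is either spanned by a piece or is one of the $t$ breaks, giving only $4n\le 2t$, i.e.\ $t\ge 2n$, which is not enough. The correct count --- and the paper's --- goes in the other direction: by Lemma~\ref{maximal_piece}(2-a) every block of type $v_1$ or $v_3$ must contain a break in its \emph{interior}, and by (2-b) every block of type $v_2$ or $v_4$ must contain a break in its interior \emph{or on its boundary}; since the open $v_1,v_3$ blocks and the closed $v_2,v_4$ blocks cover all possible break positions and no position lies in two of them, assigning each break to the block containing it defines a surjection from the $t$ breaks onto the $4n$ blocks, whence $t\ge 4n$. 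Your first, informal phrasing of (1) is exactly this; the repair needed is only to use the interior-versus-boundary distinction (rather than junctions) so that no single break can serve two blocks.
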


\begin{proof}
{\rm
For simplicity, we prove the lemma when $k\ge 2$.
The case where $k=1$ is treated similarly.

(1) Let $(u_r^n)\equiv (w_1w_2\cdots w_t)$ be a decomposition
of the cyclic word $(u_r^n)$ into $t$ pieces.
Such a decomposition is determined by a $t$-tuple of ``breaks''
arranged in the cyclic word $(u_r^n)$,
such that $w_i$ is the subword of $(u_r^n)$
surrounded by the $(i-1)$-th break and the $i$-th break.
(Here the indices are considered modulo $t$.)
Then Lemma ~\ref{maximal_piece}(2-a) and (2-b) imply the following:
\begin{enumerate}[\indent \rm (a)]
\item
Each subword of the form $v_1$ or $v_3$ of $(u_r^n)$
contains a break in its interior.

\item
Each subword of the form $v_2$ or $v_4$ of $(u_r^n)$
contains a break in its interior or in its boundary.
\end{enumerate}
Since each break is contained in either
(a) the interior of a subword of the form $v_1$ or $v_3$
or
(b) the interior or the boundary of a subword of the form $v_2$ or $v_4$,
the above observation implies that
there is a well-defined surjection, $\eta$,
from the set of breaks onto the set
of subwords of the form
$v_1$, $v_2$, $v_3$ or $v_4$.
Since the domain and the codomain of $\eta$ have
cardinalities $t$ and $4n$, respectively,
we have $t\ge 4n$.
This completes the proof of assertion (1).
Before proving (2), we note that if $t$ is the smallest length
of decompositions of $(u_r^n)$ into pieces,
then Lemma ~\ref{maximal_piece}(2-c) implies that
$\eta$ is injective.

(2) Let $w\equiv w_1w_2\cdots w_{4n-1}$ be a subword of
the cyclic word $(u_r^n)$,
where $w_1, \cdots, w_{4n-1}$ are pieces,
such that $w$ is not a product of $t$ pieces with $t<4n-1$.
As in the proof of (1),
the decomposition $w\equiv w_1w_2\cdots w_{4n-1}$
is determined by a $(t+1)$-tuple of breaks in $(u_r^n)$,
such that $w_i$ is the subword of $(u_r^n)$
surrounded by the $(i-1)$-th break and the $i$-th break.
Lemma ~\ref{maximal_piece} implies the following:
\begin{enumerate}[\indent \rm (a)]
\item
Each subword of the form $v_1$ or $v_3$ of $(u_r^n)$
contains a unique break in its interior.

\item
Each subword of the form $v_2$ or $v_4$ of $(u_r^n)$
contains a unique break in its interior or in its boundary.
\end{enumerate}
Suppose first that the $0$-th break is contained in
the interior of a subword of $(u_r^n)$ of the form $v_1$.
Then we see from the above observations that
$w\equiv v_{1e}(v_2v_3v_4v_1)^{n-1}v_2v_3v_{4b}$,
where $v_{1e}$ is a nonempty proper terminal subword of $v_1$
and $v_{4b}$ is a
(possibly empty or nonproper)
initial subword of $v_4$.
Let $w'$ be the subword $v_{1e}'(v_2v_3v_4v_1)^{n-1}v_2v_3$
of $w$, where $v_{1e}'$ is a nonempty positive or negative
terminal subword of $v_{1e}$.
Then we have
$S(w')=(\ell, (2n-1)\langle S_2,S_1\rangle)$, where $\ell\in\ZZ_+$.
Suppose next that the $0$-th break is contained in
the interior or the boundary of a subword of $(u_r^n)$ of the form $v_2$.
Then we see from the above observations
$w\equiv v_{2e}(v_3v_4v_1v_2)^{n-1}v_3v_4v_{1b}$,
where $v_{2e}$ is a
(possibly empty or nonproper)
terminal subword of $v_2$
and $v_{1b}$ is a nonempty proper initial subword of
$v_1$.
Let $w'$ be the subword $(v_3v_4v_1v_2)^{n-1}v_3v_4v_{1b}'$ of $w$,
where $v_{1b}'$ is a non-empty initial
positive or negative subword of $v_{1b}$.
Then we have
$S(w')=((2n-1)\langle S_1,S_2\rangle,\ell)$, where $\ell\in\ZZ_+$.
The case where
the $0$-th break is contained in
the interior of a subword of $(u_r^n)$ of the form $v_3$
and the case where
$0$-th break is contained in
the interior or the boundary of a subword of $(u_r^n)$ of the form $v_4$
are treated similarly.
}
\end{proof}

The following proposition enables us to
apply small cancellation theory to our problem.

\begin{proposition}
\label{prop:small_cancellation_condition_heckoid}
Suppose that $r$ is a rational number with $0 < r< 1$
and that $n$ is an integer with $n \ge 2$.
Let $R$ be the symmetrized subset of $F(a, b)$ generated
by the single relator $u_{r}^n$ of the presentation $\Hecke(r;n)=\langle a, b \svert u_r^n \rangle$.
Then $R$ satisfies $C(4n)$ and $T(4)$.
\end{proposition}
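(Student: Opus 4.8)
The plan is to verify the two small cancellation conditions directly from the structural information about pieces provided by Lemmas~\ref{maximal_piece} and \ref{maximal_piece2}. The condition $C(4n)$ is essentially immediate: by definition, $R$ satisfies $C(4n)$ if every element of $R$ fails to be a product of fewer than $4n$ pieces. Since every element of $R$ is a cyclic permutation of $u_r^{\pm n}$, and the cyclic word $(u_r^n)=(u_r^{-n})^{-1}$ is the relevant object, assertion~(1) of Lemma~\ref{maximal_piece2} states precisely that $(u_r^n)$ is not a product of $t$ pieces with $t\le 4n-1$. Thus no element of $R$ can be written as a product of fewer than $4n$ pieces, which is exactly $C(4n)$. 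I would only need to remark that the symmetrization (taking inverses and cyclic permutations) does not change the minimal number of pieces, since the piece structure is manifestly invariant under these operations.

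The condition $T(4)$ is the substantive part. I would argue by contradiction, considering the forbidden configurations with $t=3$: three elements $w_1,w_2,w_3\in R$, no two successive ones forming an inverse pair, such that all three products $w_1w_2$, $w_2w_3$, $w_3w_1$ involve nontrivial cancellation. Each cancellation, when maximal, exhibits a common piece between the relevant cyclic relators, so the $T(4)$ condition translates into a statement about how three cyclic copies of $u_r^{\pm n}$ can simultaneously overlap at a common vertex. The geometric picture is that the three cancellation regions meet along a shared terminal/initial segment; writing out what each $w_i$ must look like near the meeting point, I would show that the overlaps force a piece longer than Lemma~\ref{maximal_piece} permits, or force two of the $w_i$ to coincide or be inverse to one another.

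The main obstacle, and where the real work lies, is the careful bookkeeping of the overlaps using the $S$-sequence data. The key leverage is assertion~(2) of Lemma~\ref{maximal_piece2}: any subword of $(u_r^n)$ realized as a product of exactly $4n-1$ pieces must contain a long alternating block whose $S$-sequence is of the form $((2n-1)\langle S_1,S_2\rangle,\ell)$ or $(\ell,(2n-1)\langle S_2,S_1\rangle)$. The plan is to show that a $T(4)$ violation would produce overlaps whose $S$-sequences are incompatible with the symmetry and uniqueness properties of $S_1$ and $S_2$ recorded in Lemma~\ref{lem:sequence}---in particular that each $S_i$ is symmetric and occurs only twice in $CS(r)$. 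Concretely, a triple overlap at a single vertex forces two distinct cancellation pieces to share a boundary point inside some $v_j$, and by Lemma~\ref{maximal_piece}(2-a) no piece can contain a full $v_1$ or $v_3$; tracing the three arcs around the vertex then yields a piece spanning an entire $v_1$ or $v_3$, the desired contradiction. I expect the bulk of the argument to be this case analysis, organized according to which type of subword ($v_1,v_2,v_3,v_4$) contains the common boundary point, mirroring the case division already used in the proof of Lemma~\ref{maximal_piece2}.
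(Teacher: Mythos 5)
Your treatment of $C(4n)$ is correct and coincides with the paper's: the paper disposes of this half in one sentence, observing that it ``is nothing other than Lemma~\ref{maximal_piece2}(1)'', and your remark that symmetrization (cyclic permutation and inversion) does not change the minimal number of pieces is the right justification for why the statement about the single cyclic word $(u_r^n)$ suffices for all of $R$.

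The $T(4)$ half, however, has a genuine gap. A violation of $T(4)$ with $t=3$ only requires that in each of the products $w_1w_2$, $w_2w_3$, $w_3w_1$ a \emph{single letter} cancels; it carries no information about long overlaps, so it cannot be played off against the piece-length restrictions of Lemma~\ref{maximal_piece} or against Lemma~\ref{maximal_piece2}(2). Your proposed mechanism --- extracting from a ``triple overlap'' a piece that contains an entire $v_1$ or $v_3$ --- therefore does not get off the ground: nothing forces the three cancellations to be long, and $C(p)$-type information is logically independent of $T(q)$. The property that actually makes $T(4)$ hold here, and which your sketch never invokes, is that $u_r^n$ is cyclically alternating of even length. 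Consequently every element of $R$ begins with a power of one generator and ends with a power of the other; cancellation in $w_iw_{i+1}$ forces the last letter of $w_i$ and the first letter of $w_{i+1}$ to involve the same generator, and chasing this condition around the cycle $w_1,w_2,w_3$ demands three pairwise distinct elements of the two-element set $\{a,b\}$, a contradiction. This short parity argument, entirely independent of the piece structure, is what the paper appeals to when it says the $T(4)$ assertion ``is proved exactly as in the proof of Theorem~5.1'' of the earlier Lee--Sakuma paper.
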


\begin{proof}
{\rm
The assertion that $R$ satisfies $C(4n)$
is nothing other than Lemma ~\ref{maximal_piece2}(1).
The assertion that $R$ satisfies $T(4)$
is proved exactly as in \cite[Proof of Theorem ~5.1]{lee_sakuma}.
}
\end{proof}

Now we want to investigate the geometric consequences of
Proposition ~\ref{prop:small_cancellation_condition_heckoid}.
Let us begin with necessary definitions and notation following \cite{lyndon_schupp}.
A {\it map} $M$ is a finite $2$-dimensional cell complex embedded in $\RR^2$,
namely a finite collection of vertices ($0$-cells), edges ($1$-cells),
and faces ($2$-cells) in $\RR^2$.
The boundary (frontier), $\partial M$, of $M$ in $\RR^2$
is regarded as a $1$-dimensional subcomplex of $M$.
An edge may be traversed in either of two directions.
If $v$ is a vertex of a map $M$, then $d_M(v)$, the {\it degree of $v$}, will
denote the number of oriented edges in $M$ having $v$ as initial vertex.
A vertex $v$ of $M$ is called an {\it interior vertex}
if $v\not\in \partial M$, and an edge $e$ of $M$ is called
an {\it interior edge} if $e\not\subset \partial M$.

A {\it path} in $M$ is a sequence of oriented edges $e_1, \dots, e_t$ such that
the initial vertex of $e_{i+1}$ is the terminal vertex of $e_i$ for
every $1 \le i \le t-1$. A {\it cycle} is a closed path, namely
a path $e_1, \dots, e_t$ such that the initial vertex of $e_1$ is the terminal vertex of $e_n$.
If $D$ is a face of $M$, then any cycle of minimal length which includes
all the edges of the boundary, $\partial D$, of $D$
is called a {\it boundary cycle} of $D$.
By $d_M(D)$, the {\it degree of $D$}, we denote the number of
oriented edges in a boundary cycle of $D$.

\begin{definition}
{\rm A non-empty map $M$ is called a {\it $[p, q]$-map} if the following conditions hold.
\begin{enumerate}[\indent \rm (i)]
\item $d_M(v) \ge p$ for every interior vertex $v$ in $M$.

\item $d_M(D) \ge q$ for every face $D$ in $M$.
\end{enumerate}
}
\end{definition}

If $M$ is connected and simply connected, then a {\it boundary cycle} of $M$
is defined to be a cycle of minimal length which contains all the edges of
$\partial M$ going around once along the boundary of $\mathbb{R}^2 -M$.

\begin{definition}
{\rm Let $R$ be a symmetrized subset of $F(X)$. An {\it $R$-diagram} is
a map $M$ and a function $\phi$ assigning to each oriented edge $e$ of $M$, as a {\it label},
a reduced word $\phi(e)$ in $X$ such that the following hold.
\begin{enumerate}[\indent \rm (1)]
\item If $e$ is an oriented edge of $M$ and $e^{-1}$ is the oppositely oriented edge, then $\phi(e^{-1})=\phi(e)^{-1}$.

\item For any boundary cycle $\delta$ of any face of $M$,
$\phi(\delta)$ is a cyclically reduced word
representing an element of $R$.
(If $\alpha=e_1, \dots, e_t$ is a path in $M$, we define $\phi(\alpha) \equiv \phi(e_1) \cdots \phi(e_t)$.)
\end{enumerate}
In particular, if a group $G$ is presented by $G=\langle X \,|\, R \, \rangle$ with $R$ being symmetrized,
then a connected and simply connected $R$-diagram is called a {\it van Kampen diagram}
over the group presentation $G=\langle X \,|\, R \, \rangle$.
}
\end{definition}

Let $D_1$ and $D_2$ be faces (not necessarily distinct) of $M$
with an edge $e \subseteq \partial D_1 \cap \partial D_2$.
Let $e \delta_1$ and $\delta_2e^{-1}$ be boundary cycles of $D_1$ and $D_2$, respectively.
Let $\phi(\delta_1)=f_1$ and $\phi(\delta_2)=f_2$.
An $R$-diagram $M$ is called {\it reduced} if one never has $f_2=f_1^{-1}$.
It should be noted that if $M$ is reduced
then $\phi(e)$ is a piece for every interior edge $e$ of $M$.
A {\it boundary label of $M$} is defined to be a word $\phi(\alpha)$ in $X$ for $\alpha$ a boundary cycle of $M$.
It is easy to see that any two boundary labels of $M$ are cyclic permutations of each other.

We recall the following lemma which is a well-known
classical result in combinatorial group theory
(see \cite{lyndon_schupp}).

\begin{lemma} [van Kampen]
\label{van_Kampen}
Suppose $G=\langle X \,|\, R \, \rangle$ with $R$ being symmetrized.
Let $v$ be a cyclically reduced
word in $X$. Then $v=1$ in $G$ if and only if
there exists a reduced van Kampen diagram $M$
over $G=\langle X \,|\, R \, \rangle$
with a boundary label $v$.
\end{lemma}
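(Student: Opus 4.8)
The plan is to prove the two implications separately, with the forward direction (from $v=1$ to the existence of a diagram) being the substantive one. Throughout I work in the free group $F(X)$ and identify words with their images, so that $v=1$ in $G$ means precisely $v\in\llangle R\rrangle$, the normal closure of $R$ in $F(X)$.

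For the easy direction, suppose a reduced van Kampen diagram $M$ with boundary label $v$ is given. I would argue by induction on the number of faces that any boundary label of a connected, simply connected $R$-diagram lies in $\llangle R\rrangle$. The base case of zero faces gives a tree, whose boundary label freely reduces to the empty word. For the inductive step I would choose a face $D$ meeting $\partial M$ along a boundary arc, read a boundary cycle of $M$ traversing this arc, and observe that deleting $D$ (together with any resulting tree of free edges) produces a diagram with one fewer face whose boundary label differs from that of $M$ by a conjugate $g\,\phi(\partial D)\,g^{-1}$ of the relator read around $D$. Since $\phi(\partial D)\in R$, the claim follows, giving $v\in\llangle R\rrangle$, i.e.\ $v=1$ in $G$. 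Note that reducedness is not needed here.

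For the converse, suppose $v=1$ in $G$, so that in $F(X)$ one may write
\[
v = \prod_{i=1}^{k} g_i\, r_i\, g_i^{-1},
\qquad r_i\in R,\ g_i\in F(X).
\]
From this expression I would build an initial, not-necessarily-reduced diagram of ``lollipop'' (flower) type: for each $i$ take a polygonal face with boundary cycle labelled $r_i$, attach to it a stem labelled $g_i$, and wedge all $k$ stems at a common basepoint on the outer boundary. The resulting complex is connected and simply connected, and a boundary cycle reads exactly the right-hand side above, whose free reduction is $v$. Folding together edges carrying the same label emanating from the basepoint, and freely reducing, yields a genuine $R$-diagram $M_1$ whose boundary label is freely equal to $v$; since $v$ is cyclically reduced, a boundary label of $M_1$ is (a cyclic permutation of) $v$.

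The main obstacle, and the heart of the argument, is passing from $M_1$ to a \emph{reduced} diagram. I would eliminate offending configurations by a complexity reduction on the number of faces. If $M_1$ is not reduced, there are faces $D_1,D_2$ sharing an edge $e$ with $\phi(\delta_2)=\phi(\delta_1)^{-1}$ for the complementary boundary arcs; I would excise the open edge $e$, merging $D_1$ and $D_2$ into a single region whose boundary label freely reduces to the empty word, and then collapse that region together with any spikes or trees of free edges it creates. Each such move strictly decreases the number of faces while preserving connectedness, simple connectedness, and the boundary label up to free reduction; because the target $v$ is cyclically reduced, the boundary label is in fact unchanged. As the face count is a nonnegative integer, the process terminates in a reduced van Kampen diagram $M$ with boundary label $v$. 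The delicate points to verify are that excising $e$ indeed leaves a planar diagram (no new face is created and simple connectivity is retained) and that collapsing free edges never alters the cyclically reduced word $v$; both follow from a direct inspection of the local picture.
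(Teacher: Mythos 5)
Your argument is essentially correct, but note that the paper does not prove this lemma at all: it is quoted as a classical result with a pointer to Lyndon--Schupp, so there is no ``paper proof'' to compare against beyond that citation. What you have written is, in substance, the standard textbook proof from that reference: the easy direction by induction on the number of faces (peeling off a face meeting the boundary changes the boundary label by a conjugate of a relator), and the hard direction by assembling a lollipop diagram from an expression $v=\prod_i g_i r_i g_i^{-1}$, folding, and then cancelling reducible pairs of faces. The value of writing it out is that it makes visible exactly where the care is needed, and you have correctly located the delicate points but deferred them: (i) when you excise the shared edge $e$ and collapse the resulting null region, the two faces may share more than one edge, the sewing-up may pinch the diagram into a wedge of simply connected pieces, and the definition in the paper explicitly allows $D_1=D_2$, which requires a separate (sphere-pinch) treatment; (ii) one must check that the boundary label is only ever altered by free reduction, so that cyclic reducedness of $v$ guarantees the final label is $v$ itself and not merely freely equal to it. These are precisely the points Lyndon--Schupp handles in detail; your complexity argument (strict decrease of the face count) is the right engine, so the proposal is a faithful, if slightly compressed, version of the standard proof rather than a new route.
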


As explained in \cite[Convention ~1] {lee_sakuma},
we may assume the following convention.

\begin{convention}
\label{convention}
{\rm Let $R$ be the symmetrized subset of $F(a, b)$
generated by the single relator $u_r^n$ of the presentation
$\Hecke(r;n)=\langle a, b \svert u_r^n \rangle$.
For any reduced $R$-diagram $M$,
we assume that $M$ satisfies the following.
\begin{enumerate}[\indent \rm (1)]
\item Every interior vertex of $M$ has degree at least three.

\item For every edge $e$ of $\partial M$,
the label $\phi(e)$ is a piece.

\item For a path $e_1, \dots, e_t$ in $\partial M$ of length $n\ge 2$
such that the vertex $e_i\cap e_{i+1}$ has degree $2$
for $i=1,2,\dots, t-1$,
$\phi(e_1) \phi(e_2) \cdots\phi(e_t)$ cannot be expressed as a product of less than $t$ pieces.
\end{enumerate}
}
\end{convention}

The following corollary is immediate from Proposition ~\ref{prop:small_cancellation_condition_heckoid}
and Convention ~\ref{convention}.

\begin{corollary}
\label{small_cancellation_condition_2}
Suppose that $r$ is a rational number with $0 < r< 1$
and that $n$ is an integer with $n \ge 2$.
Let $R$ be the symmetrized subset of $F(a, b)$
generated by the single relator $u_r^n$ of the presentation $\Hecke(r;n)=\langle a, b \svert u_r^n \rangle$.
Then every reduced $R$-diagram is a $[4, 4n]$-map.
\end{corollary}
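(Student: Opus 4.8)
The plan is to recognize the statement as the standard Lyndon--Schupp passage from the metric small cancellation conditions to the combinatorial degree bounds, fed by Proposition \ref{prop:small_cancellation_condition_heckoid}. By definition a $[4,4n]$-map is one in which (ii) $d_M(D)\ge 4n$ for every face $D$ and (i) $d_M(v)\ge 4$ for every interior vertex $v$; I would derive these from $C(4n)$ and from $T(4)$, respectively.

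For the face bound, the first observation is that in a reduced $R$-diagram satisfying Convention \ref{convention} \emph{every} edge is labelled by a piece: interior edges by the remark following the definition of a reduced diagram, and boundary edges by Convention \ref{convention}(2). Fix a face $D$ with boundary cycle $\delta$. By the definition of an $R$-diagram, $\phi(\delta)$ is a cyclically reduced word representing some $w\in R$, and reading $\delta$ edge by edge expresses $w$ as a product of exactly $d_M(D)$ pieces. Since $R$ satisfies $C(4n)$ by Proposition \ref{prop:small_cancellation_condition_heckoid}, no expression of $w$ as a product of pieces can have fewer than $4n$ factors, and hence $d_M(D)\ge 4n$.

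For the interior-vertex bound I would argue by contradiction. Convention \ref{convention}(1) already gives $d_M(v)\ge 3$, so it suffices to rule out $d_M(v)=3$. If $d_M(v)=3$, let $D_1,D_2,D_3$ be the faces meeting at $v$ in cyclic order and $w_1,w_2,w_3\in R$ their boundary labels, chosen so that each consecutive pair $D_i,D_{i+1}$ shares the edge of $M$ emanating from $v$ between them. Because $M$ is reduced, no two consecutive $w_i$ form an inverse pair, so from $3<4$ and the condition $T(4)$ one of the products $w_1w_2$, $w_2w_3$, $w_3w_1$ is freely reduced without cancellation. But in that product the edge shared at $v$ contributes a label and its inverse that meet and cancel, a contradiction; therefore $d_M(v)\ge 4$.

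The only place calling for genuine care is the vertex bound: one must fix the orientations and cyclic basepoints of $w_1,w_2,w_3$ around $v$ so that the cancellation forced by each shared edge is precisely the event that $T(4)$ excludes. This is where the geometric content of the $T(q)$ condition is used, and it is the step most easily mishandled; granting this routine bookkeeping, both conditions (i) and (ii) hold simultaneously and $M$ is a $[4,4n]$-map.
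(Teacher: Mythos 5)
Your proposal is correct and is exactly the derivation the paper intends: the paper states the corollary is ``immediate'' from Proposition~\ref{prop:small_cancellation_condition_heckoid} and Convention~\ref{convention}, and what you have written out is the standard Lyndon--Schupp translation of $C(4n)$ into the face-degree bound (every edge label is a piece, by reducedness for interior edges and Convention~\ref{convention}(2) for boundary edges) and of $T(4)$, together with Convention~\ref{convention}(1), into the interior-vertex bound. No gaps beyond the routine bookkeeping you already flag.
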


We recall the following lemma
obtained from the arguments of \cite[Theorem ~V.3.1]{lyndon_schupp}.

\begin{lemma}[cf. {\cite[Theorem ~V.3.1]{lyndon_schupp}}]
\label{lem:inequality}
Let $M$ be an arbitrary connected and simply-connected map. Then
\[
4 \le \sum_{v \in \partial M} (3-d_M(v))+ \sum_{v \in M -\partial M} (4-d_M(v))+ \sum_{D \in M} (4-d_M(D)).
\]
In particular, if $M$ is a $[4,4n]$-map, then
\[
4 \le \sum_{v \in \partial M} (3-d_M(v)) + \sum_{D \in M} (4-4n).
\]
\end{lemma}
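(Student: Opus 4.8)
The plan is to prove the displayed inequality by a discrete Gauss--Bonnet (Euler characteristic) count, exactly in the spirit of \cite[Theorem~V.3.1]{lyndon_schupp}. Write $V$, $E$, $F$ for the numbers of vertices, edges, and faces ($2$-cells) of $M$, and let $b$ denote the number of boundary vertices (those lying on $\partial M$). Since $M$ is connected and simply connected, Euler's formula gives $V-E+F=1$. The first step is to rewrite the right-hand side of the asserted inequality purely in terms of $V$, $E$, $F$, $b$, and the total face-degree $\sum_{D}d_M(D)$.

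Using the incidence identity $\sum_{v\in M}d_M(v)=2E$ and splitting the vertex sum into its boundary and interior parts, I would compute
\[
\sum_{v\in\partial M}(3-d_M(v))+\sum_{v\in M-\partial M}(4-d_M(v))=4V-b-2E,
\]
and likewise $\sum_{D\in M}(4-d_M(D))=4F-\sum_{D}d_M(D)$. Adding these and substituting $V-E+F=1$ collapses the constant part to $4$, so the entire right-hand side equals
\[
4+\Bigl(2E-b-\sum_{D\in M}d_M(D)\Bigr).
\]
Thus the first inequality is equivalent to the purely combinatorial claim $2E\ge b+\sum_{D}d_M(D)$.

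To establish this, I would classify each edge by how many bounded faces run along it: let $E_2$, $E_1$, $E_0$ be the numbers of edges bordering two faces (the interior edges), exactly one face, and no face (boundary spurs), respectively. Each edge counted by $E_2$ is traversed twice and each counted by $E_1$ once by the collection of face boundary cycles, so $\sum_{D}d_M(D)=2E_2+E_1$, whence $2E-\sum_{D}d_M(D)=E_1+2E_0$. It then remains to show $E_1+2E_0\ge b$. Here I would use that, because $M$ is simply connected, its frontier $\partial M$ is a connected subcomplex with $b$ vertices and edge set of size $E_1+E_0$, so connectivity gives $E_1+E_0\ge b-1$; a short case analysis finishes the job (if some spur is present, i.e. $E_0\ge1$, then $E_1+2E_0=(E_1+E_0)+E_0\ge b$; if $E_0=0$, then $\partial M$ is not a tree, since it contains the outer boundary cycle, so $E_1\ge b$). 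I expect this edge bookkeeping---in particular the correct treatment of boundary spurs and of pinch points where the boundary cycle touches itself---to be the only delicate point; the inequality (rather than equality) is exactly what absorbs these degenerate boundary configurations, and this is the content borrowed from \cite[Theorem~V.3.1]{lyndon_schupp}.

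Finally, the \emph{in particular} statement is a direct substitution of the $[4,4n]$-map hypotheses into the inequality just proved. For an interior vertex, $d_M(v)\ge4$ gives $4-d_M(v)\le0$, so the interior-vertex sum may be discarded without decreasing the right-hand side; and for each face, $d_M(D)\ge4n$ gives $4-d_M(D)\le4-4n$. Replacing each face term by $4-4n$ and dropping the nonpositive interior-vertex contributions yields
\[
4\le\sum_{v\in\partial M}(3-d_M(v))+\sum_{D\in M}(4-4n),
\]
as required.
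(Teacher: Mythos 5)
The paper gives no proof of this lemma at all---it merely recalls it from \cite[Theorem V.3.1]{lyndon_schupp}---and your Euler-characteristic double count is essentially the standard argument behind that citation: the identities $\sum_{v}d_M(v)=2E$ and $\sum_{D}d_M(D)=2E_2+E_1$, the reduction to $E_1+2E_0\ge b$, and the use of connectedness of $\partial M$ (which follows from unicoherence of $S^2$ applied to the compact, connected, simply connected set $M$) all check out. The one caveat is the degenerate case $F=0$: your final case split (``$E_0=0$ forces a cycle in $\partial M$'') tacitly assumes $M$ has a face, and indeed for the one-vertex map the asserted inequality reads $4\le 3$ and is false---but this is a defect of the statement as quoted (in every application $M$ is a nontrivial reduced van Kampen diagram, hence has at least one face) rather than of your argument.
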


We now close this section with the following proposition
which will play an important role
in the proof of Theorem ~\ref{thm:null-homotopy}.

\begin{proposition}
\label{prop:key}
Let $M$ be an arbitrary connected and simply-connected $[4,4n]$-map
such that there is no vertex of degree $3$ in $\partial M$.
Put
\[
\begin{aligned}
A= &\ \text{\rm the number of vertices $v$ in $\partial M$ such that $d_M(v)=2$,} \\
B= &\ \text{\rm the number of vertices $v$ in $\partial M$ such that $d_M(v) \ge 4$.}
\end{aligned}
\]
Then the following inequality holds.
\[
A \ge (4n-3)B+4n
\]
\end{proposition}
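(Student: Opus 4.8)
The plan is to feed the structural data into the combinatorial Gauss--Bonnet inequality of Lemma~\ref{lem:inequality} and then supply the single extra estimate that the face-degree bound by itself does not give. Since $M$ is a $[4,4n]$-map, Lemma~\ref{lem:inequality} gives
\[
4 \le \sum_{v\in\partial M}(3-d_M(v)) + \sum_{D\in M}(4-4n) = \sum_{v\in\partial M}(3-d_M(v)) - (4n-4)F,
\]
where $F$ is the number of faces of $M$. By hypothesis no boundary vertex has degree $3$, so every boundary vertex has degree $2$ or degree $\ge 4$ (we may assume there is no boundary vertex of degree $0$ or $1$; see the last paragraph). A degree-$2$ boundary vertex contributes $+1$ to the first sum, while each of the $B$ vertices of degree $\ge 4$ contributes at most $-1$, so $\sum_{v\in\partial M}(3-d_M(v)) \le A - B$. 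Hence $A \ge B + (4n-4)F + 4$, and the proposition will follow once I show $F \ge B+1$, since substituting this gives $A \ge B + (4n-4)(B+1) + 4 = (4n-3)B + 4n$.

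To prove $F \ge B+1$ I would argue by Euler characteristic together with edge counting. Writing $V,E,F$ for the numbers of vertices, edges, and faces of $M$, simple connectivity gives $V - E + F = 1$. Split the vertices into the $V_{\mathrm{int}}$ interior ones, the $A$ boundary vertices of degree $2$, and the $B$ boundary vertices of degree $\ge 4$, so $V = V_{\mathrm{int}} + A + B$. The handshake identity $2E = \sum_v d_M(v)$, together with $d_M(v)\ge 4$ for every interior vertex, yields
\[
2E = \sum_{v\in M-\partial M} d_M(v) + \sum_{v\in\partial M} d_M(v) \ge 4V_{\mathrm{int}} + 2A + 4B,
\]
so $E \ge 2V_{\mathrm{int}} + A + 2B$. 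Substituting into $F = 1 + E - V$ gives
\[
F = 1 + E - (V_{\mathrm{int}}+A+B) \ge 1 + (2V_{\mathrm{int}}+A+2B) - (V_{\mathrm{int}}+A+B) = 1 + V_{\mathrm{int}} + B \ge B+1,
\]
as required. Note that this step uses only the interior-vertex condition $d_M(v)\ge 4$; the face condition $d_M(D)\ge 4n$ enters the whole argument solely through Lemma~\ref{lem:inequality}.

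Combining the two estimates finishes the proof, and the only point requiring genuine care is the passage from the hypotheses to the degree dichotomy for boundary vertices. The absence of degree-$3$ boundary vertices is assumed, but a boundary vertex of degree $0$ or $1$ would \emph{simultaneously} wreck the bound $\sum_{v\in\partial M}(3-d_M(v))\le A-B$ (a degree-$1$ vertex contributes $+2$, not $\le 0$) and the inequality $F\ge B+1$; so the substantive issue is to guarantee that such vertices do not arise. In the setting in which this proposition is applied this is automatic: a degree-$1$ vertex $v$ at the end of an edge $e$ would force a boundary cycle of the incident face to traverse $\phi(e)\phi(e)^{-1}$, contradicting the cyclic reducedness of the relator words. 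I would therefore record at the outset the standing reduction that every vertex of $M$ has degree $\ge 2$, after which the remainder is elementary counting.
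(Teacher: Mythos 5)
Your argument is correct and is essentially identical to the paper's proof: both feed the hypotheses into Lemma~\ref{lem:inequality}, bound $\sum_{v\in\partial M}(3-d_M(v))$ by $A-B$, and derive $F\ge B+1$ from Euler's formula together with the handshake count $2E\ge 2A+4(V-A)$. Your added remark about excluding boundary vertices of degree $0$ or $1$ is a reasonable piece of care that the paper leaves implicit, but it does not change the route.
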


\begin{proof}
{\rm
Put
\[
\begin{aligned}
V= &\ \text{\rm the number of vertices of $M$,} \\
E= &\ \text{\rm the number of (unoriented) edges of $M$,} \\
F= &\ \text{\rm the number of faces of $M$.}
\end{aligned}
\]
Then, since every interior vertex in $M$ has degree at least $4$,
we have
\[E \ge {1 \over 2}\{2A+4(V-A)\}=2V-A.\]
This inequality together with Euler's formula $1=V-E+F$ yields
$1 \le V-(2V-A)+F$, so that
\[\tag{\dag}
F \ge V-A+1 \ge (A+B)-A+1=B+1.
\]
On the other hand, by Lemma ~\ref{lem:inequality}, we have
\[
4 \le \sum_{v \in \partial M} (3-d_M(v)) + \sum_{D \in M} (4-4n)
=\sum_{v \in \partial M} (3-d_M(v)) + (4-4n)F,
\]
so that $\sum_{v \in \partial M} (3-d_M(v)) \ge 4+ (4n-4)F$.
Here, since $A-B \ge \sum_{v \in \partial M} (3-d_M(v))$
and since $(4n-4)F \ge (4n-4)(B+1)$ by ($\dag$),
we have
\[
A-B \ge (4n-4)(B+1)+4=(4n-4)B+4n,
\]
so that $A \ge (4n-4)B+4n+B=(4n-3)B+4n$, as required.
}
\end{proof}

\begin{corollary}
\label{cor:key}
Let $r$ be a rational number with $0 < r< 1$
and let $n$ be an integer with $n \ge 2$.
Write $r=[m_1, m_2, \dots, m_k]$, where $k \ge 1$,
$(m_1, \dots, m_k) \in (\mathbb{Z}_+)^k$ and $m_k \ge 2$,
and let $S(r)=(S_1, S_2, S_1, S_2)$ be as in Lemma ~\ref{lem:sequence}.
Suppose that $v$ is a
cyclically alternating word
which represents the trivial element
in $\Hecke(r;n)=\langle a, b \svert u_r^n \rangle$.
Then the cyclic word $(v)$
contains a subword $w$
of the cyclic word $(u_r^{\pm n})$
which is a product of $4n-1$ pieces
but is not a product of less than $4n-1$ pieces.
In particular,
the cyclic $S$-sequence $CS(v)$
of the cyclic word $(v)$
satisfies the following conditions.
\begin{enumerate}[\indent \rm (1)]
\item
If $k=1$, then
$CS(v)$ contains
$((2n-2) \langle m_1 \rangle)$ as a subsequence.
\item
If $k\ge 2$, then $CS(v)$ contains
$((2n-1) \langle S_1, S_2 \rangle)$
or $((2n-1) \langle S_2, S_1 \rangle)$ as a subsequence.
\end{enumerate}
\end{corollary}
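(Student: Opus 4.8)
The plan is to realize the triviality of $v$ by a van Kampen diagram and then to feed the global combinatorial inequality of Proposition~\ref{prop:key} into the local piece analysis of Lemma~\ref{maximal_piece2}. First I would invoke Lemma~\ref{van_Kampen}: since $v$ is cyclically reduced (being cyclically alternating and nonempty) and represents $1$ in $\Hecke(r;n)$, there is a reduced van Kampen diagram $M$ over $\langle a,b \svert u_r^n\rangle$ whose boundary label is $v$. I choose $M$ with the fewest faces, so that Convention~\ref{convention} applies, and then Corollary~\ref{small_cancellation_condition_2} guarantees that $M$ is a $[4,4n]$-map. Since $v$ is a nonempty reduced word, $M$ carries at least one face.

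The key step -- and the one I expect to be the main obstacle -- is to show that $M$ has no boundary vertex of degree $3$, so that Proposition~\ref{prop:key} becomes applicable; here the hypothesis that $v$ is cyclically alternating is decisive. Because $u_r$ begins with $a^{\pm1}$ and ends with $b^{\pm1}$, the cyclic word $(u_r^{\pm n})$ is cyclically alternating, so every face-boundary label, and hence every piece, is an alternating word. Suppose $v_0\in\partial M$ had degree $3$, with boundary edges $e_1,e_2$ and an interior edge $e_3$ meeting there and separating faces $D_1,D_2$. Writing $x,y,z$ for the relevant terminal/initial letters of $\phi(e_1),\phi(e_3),\phi(e_2)$ at $v_0$, alternation of $\partial D_1$ and of $\partial D_2$ forces the pairs $(x,y)$ and $(y,z)$ to involve different generators, whence $x$ and $z$ involve the \emph{same} generator; but alternation of the boundary label $v$ forces $x$ and $z$ to involve different generators, a contradiction. (Degenerate configurations, such as $D_1=D_2$ or spikes, are excluded by reducedness together with Convention~\ref{convention}.) Thus $M$ has no degree-$3$ boundary vertex.

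With this in hand I apply Proposition~\ref{prop:key} to obtain $A\ge(4n-3)B+4n\ge 4n$, where $A$ and $B$ count the boundary vertices of degree $2$ and of degree $\ge 4$, respectively. The degree-$\ge 4$ vertices cut $\partial M$ into $B$ arcs (or, when $B=0$, into the single boundary cycle), each arc having only degree-$2$ interior vertices and therefore lying on the boundary of a single face; by Convention~\ref{convention}(3) an arc consisting of $\ell$ edges is a product of exactly $\ell$ pieces and of no fewer. Distributing the $A\ge(4n-3)B+4n$ degree-$2$ vertices among the $B$ arcs, a pigeonhole count produces an arc with at least $4n-2$ interior degree-$2$ vertices, i.e.\ at least $4n-1$ edges; truncating to exactly $4n-1$ consecutive edges yields a subword $w$ of $(u_r^{\pm n})$, which is simultaneously a subword of $(v)$, that is a product of $4n-1$ pieces but not of fewer.

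Finally, Lemma~\ref{maximal_piece2}(2) supplies inside $w$ a subword $w'$ with $S(w')=((2n-1)\langle S_1,S_2\rangle,\ell)$ or $S(w')=(\ell,(2n-1)\langle S_2,S_1\rangle)$, where $\ell\in\ZZ_+$. Since $w'$ is a subword of $(v)$, its $S$-sequence occurs as a contiguous block of the cyclic $S$-sequence $CS(v)$, and discarding the single connecting term $\ell$ gives assertion~(2). Assertion~(1), the case $k=1$, runs along the same lines using the $k=1$ part of Lemma~\ref{maximal_piece2}; the only change is that the partial syllables at the two ends of $w'$ reduce the guaranteed block length from $2n-1$ to $2n-2$, yielding $((2n-2)\langle m_1\rangle)$ as a subsequence of $CS(v)$.
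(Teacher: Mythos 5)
Your proposal is correct and follows essentially the same route as the paper's proof: a reduced van Kampen diagram, Corollary~\ref{small_cancellation_condition_2} to get a $[4,4n]$-map, exclusion of degree-$3$ boundary vertices via cyclic alternation, Proposition~\ref{prop:key} plus a pigeonhole count to locate $4n-2$ consecutive degree-$2$ boundary vertices, Convention~\ref{convention} to produce the subword $w$, and Lemma~\ref{maximal_piece2}(2) to read off the $S$-sequences. You merely spell out two steps the paper leaves implicit (the degree-$3$ argument and the distribution of degree-$2$ vertices among boundary arcs), which is consistent with, not divergent from, the published argument.
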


\begin{proof}
By Lemma ~\ref{van_Kampen},
there is a reduced connected and simply-connected
diagram $M$ over $\Hecke(r;n)=\langle a, b \svert u_r^n \rangle$ with
$(\phi(\partial M))=(v)$.
By Corollary ~\ref{small_cancellation_condition_2},
$M$ is a $[4,4n]$-map over $\Hecke(r;n)=\langle a, b \svert u_r^n \rangle$.
Furthermore, since $(\phi(\partial M))=(v)$ is cyclically alternating,
there is no vertex of degree $3$ in $\partial M$.
Then by Proposition ~\ref{prop:key},
we have $A \ge (4n-3)B+4n$,
where $A$ and $B$ denote the numbers of vertices $v$ in $\partial M$ such that $d_M(v)=2$
and $d_M(v) \ge 4$, respectively.
This implies that there are at least $4n-2$ consecutive vertices of degree $2$ on $\partial M$.
Hence, by Convention ~\ref{convention},
the cyclic word $(\phi(\partial M))=(v)$ contains a subword $w$
of the cyclic word $(u_r^{\pm n})$
which is a product of $4n-1$ pieces
but is not a product of less than $4n-1$ pieces.
By Lemma ~\ref{maximal_piece2}(2), we may assume that
$S(w)=((2n-1) \langle S_1, S_2 \rangle, \ell)$ or $S(w)=(\ell, (2n-1) \langle S_2, S_1 \rangle)$,
where $\ell \in \ZZ_+$.
It follows that if $k=1$, then $CS(v)$ contains
$((2n-2) \langle m_1 \rangle)$ as a subsequence,
while if $k\ge 2$, then $CS(v)$ contains $((2n-1) \langle S_1, S_2 \rangle)$
or $((2n-1) \langle S_2, S_1 \rangle)$ as a subsequence.
\end{proof}

\begin{remark}
\rm
{In \cite[Theorem ~3]{Newman} (cf. \cite[Theorem ~IV.5.5]{lyndon_schupp}),
Newman gives a powerful theorem
for the word problem for one relator groups with torsion,
which implies that if a cyclically reduced word $v$
represents the trivial element in
$\Hecke(r;n)\cong \langle a,b \, |\, u_r^n\rangle$,
then the cyclic word $(v)$
contains a subword of the cyclic word $(u_r^{\pm n})$
of length greater than $(n-1)/n=1-1/n$ times the length of $u_r^n$.
Though
the above Corollary ~\ref{cor:key} is applicable only
when $v$ is cyclically alternating,
it imposes a stronger restriction on $(v)$.
In fact,
since every piece has length less than a half of the length of $u_r$
(see Lemma ~\ref{maximal_piece}),
Corollary \ref{cor:key} implies that
such a cyclic word $(v)$ contains
a subword of the cyclic word $(u_r^{\pm n})$
of length greater than
$1-1/(2n)$
times the length of $u_r^n$.
}
\end{remark}

\section{Proof of Theorem ~\ref{thm:null-homotopy}}
\label{sec:proof_of_the_theorem}

Throughout this section, suppose that $r$ is a rational number with $0<r<1$,
write $r=[m_1, m_2, \dots, m_k]$, where $k \ge 1$,
$(m_1, \dots, m_k) \in (\mathbb{Z}_+)^k$ and $m_k \ge 2$,
and let $n$ be an integer with $n \ge 2$.
Recall that the region, $R$, bounded by a pair of
Farey edges with an endpoint $\infty$
and a pair of Farey edges with an endpoint $r$
forms a fundamental domain for the action of $\RGPC{r}{n}$ on $\HH^2$
(see Figure ~\ref{fig.fd_orbifold}).
Let $I_1(r;n)$ and $I_2(r;n)$ be the (closed or half-closed) intervals in $\RR$
defined as follows:
\[
\begin{aligned}
I_1(r;n) &=
\begin{cases}
[0, r_1), \ \mbox{where} \ r_1=[m_1, \dots, m_k, 2n-2], & \mbox{if $k$ is odd,}\\
[0, r_1], \ \mbox{where} \ r_1=[m_1, \dots, m_{k-1}, m_k-1, 2] , & \mbox{if $k$ is even,}
\end{cases}\\
I_2(r;n) &=
\begin{cases}
[r_2, 1], \ \mbox{where} \ r_2=[m_1, \dots, m_{k-1}, m_k-1, 2], & \mbox{if $k$ is odd,}\\
(r_2,1], \ \mbox{where} \ r_2=[m_1, \dots, m_k, 2n-2], & \mbox{if $k$ is even.}
\end{cases}
\end{aligned}
\]
Then we may choose a fundamental domain $R$ so that
the intersection of $\bar R$ with $\partial \HH^2$ is equal to
the union $\bar I_1(r;n) \cup \bar I_2(r;n)\cup \{\infty,r\}$.

\begin{proposition}
\label{prop:connection}
Let $S(r)= (S_1, S_2, S_1, S_2)$ be as in Lemma ~\ref{lem:sequence}.
Then, for any $0 \neq s \in I_1(r;n) \cup I_2(r;n)$, the following hold.
\begin{enumerate}[\indent \rm (1)]
\item If $k=1$, that is, $r=1/m=[m]$, then $CS(s)$ does not contain
$((2n-2) \langle m \rangle)$ as a subsequence.

\item If $k \ge 2$, then $CS(s)$ does not contain
$((2n-1) \langle S_1, S_2 \rangle)$ nor $((2n-1) \langle S_2, S_1 \rangle)$ as a
subsequence.
\end{enumerate}
\end{proposition}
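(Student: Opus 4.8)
The plan is to prove this purely combinatorial statement by induction on the length $k$ of the continued fraction expansion $r=[m_1,\dots,m_k]$, following the inductive analysis of $S$-sequences in \cite{lee_sakuma}. Since the forbidden patterns are read directly off $CS(s)$, the whole point is to bound the number of consecutive blocks $\langle S_1,S_2\rangle$ (or the length of a run of $m$'s when $k=1$) that $CS(s)$ can contain, and to show that this number stays one short of the threshold $2n-1$ (resp.\ $2n-2$) precisely as long as $s$ does not cross the critical endpoints $r_1,r_2$.

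First I would dispose of the slopes that are ``far'' from $r$. By Lemma~\ref{lem:properties} every term of $CS(s)$ lies in $\{a,a+1\}$, where $a$ is the first partial quotient of $s$; hence, when $k\ge 2$, if $a\neq m$ then $CS(s)$ cannot simultaneously contain a term $m$ and a term $m+1$, so the block $\langle S_1,S_2\rangle$ (which contains both, by Lemma~\ref{lem:sequence}(3)(4)) cannot occur at all. This confines attention to the $s$ whose continued fraction shares its initial quotient---and, upon iterating, an entire initial segment---with $r$, which is exactly the regime handled by the reduction $\tilde r$ of Lemma~\ref{lem:induction1}. For the base case $k=1$ (so $r=1/m$ and the block degenerates to the single symbol $m$) I would read off $CS(s)$ directly from Lemma~\ref{lem:properties}: the critical slope $r_1=[m,2n-2]$ has $CS(r_1)=\lp m+1,(2n-3)\langle m\rangle,m+1,(2n-3)\langle m\rangle\rp$, whose longest run of $m$'s has length $2n-3$, and a short monotonicity argument on the partial quotients then shows that every $s\in I_1(r;n)\cup I_2(r;n)$ with $s\neq 0$ has all runs of $m$'s of length at most $2n-3<2n-2$. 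The half-open/closed conventions at $r_1$ and $r_2$ are exactly what make this bound sharp.

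For the inductive step $k\ge 2$ I would pass from $r$ to $\tilde r$ via Lemma~\ref{lem:induction1} and from $S(r)=(S_1,S_2,S_1,S_2)$ to $S(\tilde r)=(T_1,T_2,T_1,T_2)$ via Lemma~\ref{lem:relation}. The heart of the argument is the claim that if $CS(s)$ contained $\lp (2n-1)\langle S_1,S_2\rangle\rp$, then the correspondingly reduced slope $\tilde s$ would have $CS(\tilde s)$ containing $\lp(2n-1)\langle T_1,T_2\rangle\rp$: using that each $S_i$ is symmetric and occurs only twice in $CS(r)$ (Lemma~\ref{lem:sequence}), together with Corollary~\ref{cor:cor-to-relation} to locate the anchoring $(m,m)$ or $(m+1,m+1)$, the $2n-1$ occurrences of $\langle S_1,S_2\rangle$ are forced to line up, block for block, with $2n-1$ occurrences of $\langle T_1,T_2\rangle$ under the $S$-to-$T$ correspondence. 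In parallel I would verify that $s\in I_1(r;n)\cup I_2(r;n)$ forces $\tilde s\in I_1(\tilde r;n)\cup I_2(\tilde r;n)$, tracking the parity of $k$ (preserved when $m_2=1$, reversed when $m_2\ge 2$) and the accompanying switch of the open/closed endpoint conventions in the definitions of $r_1,r_2$. The inductive hypothesis applied to $\tilde r$ then yields the desired contradiction.

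The step I expect to be the main obstacle is this simultaneous bookkeeping: checking that interval membership and the count of forbidden blocks transform correctly and in tandem under reduction, across the four combinations of $m_2=1$ versus $m_2\ge 2$ and $k$ odd versus even, and reconciling the off-by-one between the threshold $2n-1$ for $k\ge 2$ and $2n-2$ for $k=1$ that surfaces when the induction bottoms out at $\tilde r$ of length one. Making the symmetric blocks $S_1,S_2$ match the blocks $T_1,T_2$ without eroding or inflating a single copy---so that the ``$-1$'' in $2n-1$ is exactly preserved---is the delicate point, and is precisely where Lemma~\ref{lem:sequence}(2) does the essential work.
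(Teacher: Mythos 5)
Your overall strategy coincides with the paper's: induction on $k$; eliminating slopes whose first partial quotient is not $m$ (since the terms of $CS(s)$ lie in $\{l_1,l_1+1\}$ while $\langle S_1,S_2\rangle$ contains both $m$ and $m+1$); reducing $r\mapsto\tilde r$, $s\mapsto\tilde s$ via Lemma~\ref{lem:induction1}; matching $\langle S_1,S_2\rangle$-blocks with $\langle T_1,T_2\rangle$-blocks via Lemma~\ref{lem:relation}, with the symmetry $t_1=t_{s_1}$ from Lemma~\ref{lem:sequence} killing the possible inflation of the end terms; and checking that $s\in I_1(r;n)\cup I_2(r;n)$ forces $\tilde s\in I_1(\tilde r;n)\cup I_2(\tilde r;n)$. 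Your base case $k=1$ is also handled the same way in the paper, except that what you call a ``short monotonicity argument'' is really a six-fold case division on $(l_1,l_2)$ (the run length of $m$'s is not monotone in $s$; the decisive case is $s=[m,l_2,\dots]$ with $2\le l_2\le 2n-3$, where Lemma~\ref{lem:induction1} bounds the run by $l_2\le 2n-3$).

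There is, however, one concrete place where the induction as you describe it would fail: the slope $r=[m,2]=2/(2m+1)$. Here $S_2=(m)$ is a single term, so Corollary~\ref{cor:cor-to-relation}(2) --- which you invoke to ``anchor'' the block alignment via an $(m,m)$ inside $S_2$ --- explicitly excludes this case. Worse, the reduction gives $\tilde r=[m_2-1]=[1]=1$, which is an integer and lies outside both the standing hypotheses ($m_k\ge 2$ when $k=1$) and your base case, so the induction hypothesis cannot be applied to $\tilde r$. The paper deals with this by a separate direct argument (its Case~2): assuming $CS(s)$ contains a forbidden pattern, it shows $CS(\tilde s)=CT(s)$ contains $((2n-2)\langle 1\rangle)$ as a \emph{proper} subsequence, deduces that $s$ must be of the form $[m,1,1,l_4,\dots,l_t]$ or $[m,2,l_3,\dots,l_t]$ with $l_3\ge 2n-2$, and verifies directly that no such $s$ lies in $[0,r_1]\cup(r_2,1]$ with $r_1=[m,1,2]$ and $r_2=[m,2,2n-2]$. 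You would need to supply this (or an equivalent) ad hoc case for your induction to close; everything else in your outline matches the paper's proof.
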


In the above proposition, we mean by a {\it subsequence} a subsequence without leap.
Namely a sequence $(a_1,a_2,\dots, a_p)$
is called a {\it subsequence} of a cyclic sequence,
if there is a sequence $(b_1,b_2,\dots, b_t)$
representing the cyclic sequence
such that $p\le t$ and $a_i=b_i$ for $1\le i\le p$.

\begin{proof}
{\rm
(1) Suppose that $r=1/m=[m]$.
Then any rational number $0 \neq s \in I_1(r;n) \cup I_2(r;n)=[0,r_1) \cup [r_2, 1]$,
where $r_1=(2n-2)/((2n-2)m+1)=[m, 2n-2]$ and $r_2=2/(2m-1)=[m-1, 2]$,
has a continued fraction expansion $s=[l_1, \dots, l_t]$,
where $t \ge 1$, $(l_1, \dots, l_t) \in (\mathbb{Z}_+)^t$ and $l_t \ge 2$ unless $t=1$,
such that
\begin{enumerate}[\indent \rm (i)]
\item $t \ge 1$ and $1 \le l_1 \le m-2$;

\item $t=1$ and $l_1=m-1$;

\item $t \ge 2$, $l_1=m-1$ and $l_2 \ge 2$;

\item $t \ge 3$, $l_1=m$ and $l_2=1$;

\item $t \ge 2$, $l_1=m$ and $2 \le l_2 \le 2n-3$; or

\item $t \ge 1$ and $l_1 \ge m+1$.
\end{enumerate}
If (i) happens, then $s=[l_1, l_2, \dots, l_t]$ with $1 \le l_1 \le m-2$,
so each component of $CS(s)$ is equal to $l_1 \le m-2$
or $l_1+1\le m-1$ by Lemma ~\ref{lem:properties}. Hence the assertion holds.
If (ii) happens, then $s=[m-1]$,
so $CS(s)= \lp m-1, m-1 \rp$. Hence the assertion holds.
If (iii) happens, then $s=[m-1, l_2, \dots, l_t]$ with $l_2 \ge 2$,
so $CS(s)$ consists of $m-1$ and $m$ but it does not have $(m,m)$ as a subsequence
by Lemma ~\ref{lem:properties}. Hence the assertion holds.
If (iv) happens, then $s=[m, 1, l_3, \dots, l_t]$,
so $CS(s)$ consists of $m$ and $m+1$ but it does not have $(m,m)$ as a subsequence
by Lemma ~\ref{lem:properties}. Hence the assertion holds.
If (v) happens, then $s=[m, l_2, \dots, l_t]$ with $2 \le l_2 \le 2n-3$,
so $CS(s)$ consists of $m$ and $m+1$ by Lemma ~\ref{lem:properties}.
Also by Lemma ~\ref{lem:induction1},
$\tilde{s}=[l_2-1, l_3, \dots, l_t]$ and $CS(\tilde{s})=CT(s)$.
Again by Lemma ~\ref{lem:properties}, each component of $CS(\tilde{s})=CT(s)$
is equal to $l_2-1 \le 2n-4$ or $l_2 \le 2n-3$.
This implies by Definition ~\ref{def_T(r)} that
$CS(s)$ contains at most $((2n-3) \langle m \rangle)$ as a subsequence, as required.
Finally, if (vi) happens, then $s=[l_1, l_2, \dots, l_t]$ with $l_1 \ge m+1$,
so each component of $CS(s)$ is equal to $l_1 \ge m+1$ or $l_1+1\ge m+2$
by Lemma ~\ref{lem:properties}. Hence the assertion holds.

(2) The proof proceeds by induction on $k \ge 2$.
For simplicity, we write $m$ for $m_1$.
By Lemma ~\ref{lem:sequence}, $S_1$ begins and ends with $m+1$,
and $S_2$ begins and ends with $m$.
Suppose on the contrary that there exists some $0 \neq s \in I_1(r;n) \cup I_2(r;n)$
for which $CS(s)$ contains $((2n-1) \langle S_1, S_2 \rangle)$ or $((2n-1) \langle S_2, S_1 \rangle)$
as a subsequence. This implies by Lemma ~\ref{lem:properties} that
$CS(s)$ consists of $m$ and $m+1$.
So $s$ has a continued fraction expansion $s=[l_1, \dots, l_t]$, where
$t \ge 2$, $(l_1, \dots, l_t) \in (\mathbb{Z}_+)^t$, $l_1=m$ and $l_t \ge 2$.
For the rational numbers $r$ and $s$, define the rational numbers
$\tilde{r}$ and $\tilde{s}$ as in Lemma ~\ref{lem:induction1}
so that $CS(\tilde{r})=CT(r)$ and $CS(\tilde{s})=CT(s)$.

We consider three cases separately.

\medskip
\noindent {\bf Case 1.} {\it $m_2=1$.}
\medskip

In this case, $k \ge 3$ and, by Corollary ~\ref{cor:cor-to-relation}(1),
$(m+1, m+1)$ appears in $S_1$ as a subsequence,
so in $CS(s)$ as a subsequence.
Thus by Lemma ~\ref{lem:properties}, $l_2=1$ and so $t \ge 3$.
So, we have
\[
\tilde{r}=[m_3, \dots, m_k] \quad \text{\rm and} \quad
\tilde{s}=[l_3, \dots, l_t].
\]
It follows from $0 \neq s \in I_1(r;n) \cup I_2(r;n)$ that
$0 \neq \tilde{s} \in I_1(\tilde{r};n) \cup I_2(\tilde{r};n)$.
At this point, we divide this case into two subcases.

\medskip
\noindent {\bf Case 1.a.} {\it $k=3$.}
\medskip

By Lemma ~\ref{lem:relation}(1), $S_1 =(m_3\langle m+1 \rangle)$ and $S_2 =(m)$.
Since $((2n-1) \langle S_1, S_2 \rangle)$ or $((2n-1) \langle S_2, S_1 \rangle)$
is contained in $CS(s)$ by assumption, $(S_2, (2n-2) \langle S_1, S_2 \rangle)$
is contained in $CS(s)$. This implies that $CS(\tilde{s})=CT(s)$ contains
$((2n-2) \langle m_3 \rangle)$ as a subsequence.
But since $\tilde{r}=1/m_3=[m_3]$ and $0 \neq \tilde{s} \in I_1(\tilde{r};n) \cup I_2(\tilde{r};n)$,
this gives a contradiction to (1).

\medskip
\noindent {\bf Case 1.b.} {\it $k\ge 4$.}
\medskip

Let $S(\tilde{r})=(T_1, T_2, T_1, T_2)$ be the decomposition of $S(\tilde{r})$
given by Lemma ~\ref{lem:sequence}.
Since $S_1$ begins and ends with $m+1$, $S_2$ begins and ends with $m$,
and since $((2n-1) \langle S_1, S_2 \rangle)$ or $((2n-1) \langle S_2, S_1 \rangle)$
is contained in $CS(s)$ by assumption,
we see by Lemma ~\ref{lem:relation}(2) that
$CS(\tilde{s})=CT(s)$ contains, as a subsequence,
\[
\begin{aligned}
&(t_1+\ell', t_2, \dots, t_{s_1-1}, t_{s_1}, T_2, (2n-2) \langle T_1, T_2 \rangle), \ \mbox{or} \\
&((2n-2) \langle T_2, T_1 \rangle, T_2, t_1, t_2, \dots, t_{s_1-1}, t_{s_1}+\ell''),
\end{aligned}
\]
where $(t_1, t_2, \dots, t_{s_1})=T_1$ and $\ell', \ell'' \in\ZZ_+\cup\{0\}$.
(Note that
$((2n-1) \langle S_1, S_2 \rangle)$ begins with $m+1$ and ends with $m$,
whereas
$((2n-1) \langle S_2, S_1 \rangle)$ begins with $m$ and ends with $m+1$.)
Since $t_1=t_{s_1}=m_3+1$ by Lemma ~\ref{lem:sequence},
this actually implies that
$\ell'=0$ or $\ell''=0$ accordingly, and therefore
$CS(\tilde{s})$ contains
$((2n-1) \langle T_1, T_2 \rangle)$ or $((2n-1) \langle T_2, T_1 \rangle)$ as a subsequence.
But since $\tilde{r}=[m_3, \dots, m_k]$ and $0 \neq \tilde{s} \in I_1(\tilde{r};n) \cup I_2(\tilde{r};n)$,
this gives a contradiction to the induction hypothesis.

\medskip
\noindent {\bf Case 2.} {\it $k=2$ and $m_2=2$.}
\medskip

In this case, $r=[m,2]$, so by Lemma ~\ref{lem:relation}(3), $S_1=(m+1)$ and $S_2=(m)$.
Since $((2n-1) \langle S_1, S_2 \rangle)$ or $((2n-1) \langle S_2, S_1 \rangle)$
is contained in $CS(s)$ by assumption, both $(m+1, (2n-2) \langle m, m+1 \rangle)$
and $((2n-2) \langle m, m+1 \rangle, m)$ are contained in $CS(s)$.
This implies that $CS(\tilde{s})=CT(s)$ contains
$((2n-2) \langle 1 \rangle)$ as a subsequence.
Moreover, we can see that this subsequence is proper,
i.e., it is not equal to the whole cyclic sequence $CS(\tilde{s})=CT(s)$.
As described below, this in turn implies that
$s$ has the form
either
$s=[m,1, 1, l_4 \dots, l_t]$
or $s=[m,2, l_3, \dots, l_t]$ with $l_3 \ge 2n-2$.
If $l_2=1$, then $\tilde s=[l_3,\dots,l_t]$ and so
$l_3$ is the minimal component of $CS(\tilde s)$
(see Lemma ~\ref{lem:properties}).
Hence we must have $l_3=1$,
i.e.,
$s=[m,1, 1, l_4 \dots, l_t]$,
because $CS(\tilde{s})$ contains $1$ as a component.
On the other hand, if $l_2\ge 2$, then $\tilde s=[l_2-1,\dots,l_t]$ and so
$l_2-1$ is the minimal component of $CS(\tilde s)$
(see Lemma ~\ref{lem:properties}).
Since $CS(\tilde{s})$ contains $1$ as a component,
we have
$l_2-1=1$, i.e., $l_2=2$.
Since $CS(\tilde{s})$ contains $((2n-2) \langle 1 \rangle)$ as a
subsequence,
we see that
$CS(\tilde{\tilde s})=CT(\tilde s)$ contains a component $\ge 2n-2$.
Since the subsequence $((2n-2) \langle 1 \rangle)$ of
$CS(\tilde{s})$ is proper, we see $t\ge 3$ and $l_3\ge 2$.
Thus $\tilde{\tilde s}=[l_3-1,\dots,l_t]$ and therefore
$l_3-1$ is the minimal component of $CS(\tilde{\tilde s})$.
Hence we must have
$l_3=(l_3-1)+1\ge 2n-2$
and so $s=[m,2, l_3, \dots, l_t]$ with $l_3 \ge 2n-2$.

But then $s$ cannot belong to the interval
$I_1(r;n) \cup I_2(r;n)=[0,r_1] \cup (r_2, 1]$,
where $r_1=[m,1, 2]$ and $r_2=[m,2, 2n-2]$, a contradiction to the hypothesis.

\medskip
\noindent {\bf Case 3.} {\it Either both $k=2$ and $m_2 \ge 3$ or both $k \ge 3$ and $m_2 \ge 2$.}
\medskip

In this case, by Corollary ~\ref{cor:cor-to-relation}(2), $(m,m)$ appears in $S_2$ as a subsequence,
so in $CS(s)$ as a subsequence.
Thus $l_2\ge 2$ by Lemma ~\ref{lem:properties}, and so we have
\[
\tilde{r}=[m_2-1,m_3, \dots, m_k] \quad \text{\rm and} \quad
\tilde{s}=[l_2-1,l_3, \dots, l_t].
\]
It follows from $0 \neq s \in I_1(r;n) \cup I_2(r;n)$ that
$0 \neq \tilde{s} \in I_1(\tilde{r};n) \cup I_2(\tilde{r};n)$.
At this point, we consider two subcases separately.

\medskip
\noindent {\bf Case 3.a.} {\it $k=2$ and $m_2 \ge 3$.}
\medskip

By Lemma ~\ref{lem:relation}(3), $S_1=(m+1)$ and $S_2 =((m_2-1)\langle m \rangle)$.
Since $((2n-1) \langle S_1, S_2 \rangle)$ or $((2n-1) \langle S_2, S_1 \rangle)$
is contained in $CS(s)$ by assumption, $(S_1, (2n-2) \langle S_2, S_1 \rangle)$
is contained in $CS(s)$. This implies that $CS(\tilde{s})=CT(s)$ contains
$((2n-2) \langle m_2-1 \rangle)$ as a subsequence.
But since $\tilde{r}=1/(m_2-1)=[m_2-1]$ and $0 \neq \tilde{s} \in I_1(\tilde{r};n) \cup I_2(\tilde{r};n)$,
this gives a contradiction to (1).

\medskip
\noindent {\bf Case 3.b.}
{\it $k \ge 3$
and $m_2 \ge 2$.}
\medskip

Let $S(\tilde{r})= (T_1, T_2, T_1, T_2)$ be
the decomposition of $S(\tilde{r})$ given by Lemma ~\ref{lem:sequence}.
Since $S_1$ begins and ends with $m+1$, $S_2$ begins and ends with $m$,
and since $((2n-1) \langle S_1, S_2 \rangle)$ or $((2n-1) \langle S_2, S_1 \rangle)$
is contained in $CS(s)$ by assumption,
we see by Lemma ~\ref{lem:relation}(4) that
$CS(\tilde{s})=CT(s)$ contains, as a subsequence,
\[
\begin{aligned}
&((2n-2) \langle T_2, T_1 \rangle, T_2, t_1, t_2, \dots, t_{s_1-1}, t_{s_1}+\ell'), \ \mbox{or} \\
&(t_1+\ell'', t_2, \dots, t_{s_1-1}, t_{s_1}, T_2, (2n-2) \langle T_1, T_2 \rangle),
\end{aligned}
\]
where $(t_1, t_2, \dots,t_{s_1})=T_1$ and $\ell', \ell'' \in\ZZ_+\cup\{0\}$.
Since $t_1=t_{s_1}=(m_2-1)+1=m_2$ by Lemma ~\ref{lem:sequence},
this actually implies that
$\ell'=0$ or $\ell''=0$ accordingly,
and therefore
$CS(\tilde{s})$ contains
$((2n-1) \langle T_1, T_2 \rangle)$ or $((2n-1) \langle T_2, T_1 \rangle)$ as a subsequence.
But since $\tilde{r}=[m_2-1, m_3, \dots, m_k]$ and $0 \neq \tilde{s} \in I_1(\tilde{r};n) \cup I_2(\tilde{r};n)$,
this gives a contradiction to the induction hypothesis.

The proof of Proposition ~\ref{prop:connection} is completed.
}
\end{proof}

We are now in a position to prove Theorem ~\ref{thm:null-homotopy}.

\begin{proof}[Proof of Theorem ~\ref{thm:null-homotopy}]
Suppose on the contrary that there exists a rational number
$s\in I(r;n)\cup\{r\}= I_1(r;n) \cup I_2(r;n) \cup\{r\}$
for which $\alpha_s$ is null-homotopic in $\orbs(r;n)$.
Then $u_s$ equals the identity in $\Hecke(r;n)$.
Since $u_r$ is a non-trivial torsion element in
$\Hecke(r;n)=\langle a, b \svert u_r^n \rangle$ by
\cite[Theorem ~IV.5.2]{lyndon_schupp},
we may assume $s\in I_1(r;n) \cup I_2(r;n)$.
By Corollary ~\ref{cor:key},
the cyclic word $(u_s)$
contains a subword $w$
of the cyclic word $(u_r^{\pm n})$
which is a product of $4n-1$ pieces
but is not a product of less than $4n-1$ pieces.
Since
$4n-1 \ge 7$,
the length of such a subword $w$ is greater or equal to $7$.
So $s$ cannot be zero, because the word $u_0=ab$ cannot contain such a subword $w$.
By Corollary ~\ref{cor:key} again,
if $r=1/m$, then $CS(u_s)=CS(s)$ contains
$((2n-2) \langle m \rangle)$ as a subsequence,
while if $r \neq 1/m$, then $CS(s)$ contains $((2n-1) \langle S_1, S_2 \rangle)$
or $((2n-1) \langle S_2, S_1 \rangle)$ as a subsequence,
where $S(r)= (S_1, S_2, S_1, S_2)$ is as in Lemma ~\ref{lem:sequence}.
This contradicts
Proposition ~\ref{prop:connection}.
\end{proof}

\section*{Acknowledgement}
The authors would like to thank the referee
for very careful reading and helpful comments.

\bibstyle{plain}

\bigskip

\end{document}